\documentclass[10pt
]{article}

\RequirePackage[OT1]{fontenc}
\RequirePackage[
amsthm,amsmath
]{imsart}

\usepackage{graphicx}
\usepackage{latexsym,amsmath}
\usepackage{amsmath,amsthm,amscd}
\usepackage{amsfonts}
\usepackage[psamsfonts]{amssymb}

\usepackage{mathabx} 

\usepackage{enumerate}

\usepackage{url}

\usepackage{tocvsec2}

\usepackage{epstopdf}
\usepackage{wrapfig}
\usepackage{float}

\usepackage{calrsfs}

\usepackage[small]{caption}
\usepackage{hyperref}



\setlength{\hoffset}{0in}
\setlength{\oddsidemargin}{0in}
\setlength{\evensidemargin}{0in}
\setlength{\textwidth}{6.5in}
\setlength{\paperwidth}{8.5in}

\setlength{\voffset}{-0.15in}
\setlength{\topmargin}{0in}
\setlength{\headheight}{0.06in}
\setlength{\headsep}{0.09in}
\setlength{\textheight}{9in}

\setlength{\paperheight}{11in}

\startlocaldefs


\theoremstyle{plain} 
\newtheorem{theorem}{Theorem}[section]
\newtheorem{corollary}[theorem]{Corollary}

\newtheorem{lemma}[theorem]{Lemma}

\newtheorem{proposition}[theorem]{Proposition}

\theoremstyle{definition} 

\theoremstyle{definition} 

\newtheorem*{ex*}{Example}
\theoremstyle{remark} 

\theoremstyle{remark} 

\newtheorem*{remark*}{Remark}

\numberwithin{equation}{section}

\makeatletter
\def\subsubsubsection{\@startsection{subsubsubsection}{4}{\z@}{-3.25ex plus -1ex minus -.2ex}{1.5ex plus .2ex}{\normalsize}}
\makeatother                                   
 
\newcommand{\beqa}{\begin{eqnarray}}
\newcommand{\eeqa}{\end{eqnarray}}
 
\newcommand{\bseq}{\begin{subequations}}
 
\newcommand{\eseq}{\end{subequations}}

\newcommand{\dd}{\partial}

\newcommand{\dom}{{\,\operatorname{dom}}}

\renewcommand{\dd}{{\,\operatorname{d}}}

\newcommand{\supp}{\operatorname{supp}}

\newcommand{\al}{\alpha}

\newcommand{\Si}{\Sigma}

\newcommand{\be}{\beta}

\renewcommand{\Psi}{\overline{\Phi}}

\newcommand{\II}{\mathcal{I}}

\renewcommand{\SS}{\mathcal{S}}

\renewcommand{\nu}{\mathsf{nu}}

\newcommand{\ii}{\operatorname{I}}

\renewcommand{\P}{\operatorname{\mathsf{P}}} 
 
\newcommand{\E}{\operatorname{\mathsf{E}}}

\newcommand{\cc}{{\operatorname{\mathsf{c}}}}

\newcommand{\R}{\mathbb{R}}

\newcommand{\tE}{{\tilde{E}}}

\renewcommand{\tt}{{\tilde{t}}}

\renewcommand{\tt}{{\mathbf{t}}}

\renewcommand{\le}{\leqslant}
\renewcommand{\ge}{\geqslant}

 \pagenumbering{arabic}

\newcommand{\li}[1]{{{#1}^*}^{-1}} 
\newcommand{\tli}[1]{\widetilde{{{#1}^*}^{-1}}} 

  \fboxsep=.8pt

  \newcommand{\fJ}{\operatorname{\raisebox{.8pt}{\fbox{\scriptsize \mathsf{H}}}}}
 \renewcommand{\fJ}{\mathrel{\raisebox{.0pt}{\fbox{\scriptsize H}}}}
 \renewcommand{\fJ}{\mathrel{\raisebox{.0pt}{\fbox{\scriptsize $\operatorname{H}$}}}}
 \renewcommand{\fJ}{\mathrel{\raisebox{.0pt}{\fbox{\scriptsize $\operatorname{\mathsf{H}}$}}}}
   \newcommand{\fJt}{\operatorname{\raisebox{.8pt}{\fbox{\tiny H}}}}
    \renewcommand{\fJt}{\mathrel{\raisebox{.0pt}{\fbox{\tiny $\operatorname{\mathsf{H}}$}}}}

\endlocaldefs

\begin{document}

\begin{frontmatter}

\title{
(Quasi)additivity properties of the Legendre--Fenchel transform and its inverse, with applications in probability}
\runtitle{Inverse of the Legendre--Fenchel transform}

%

\begin{aug}
\author{\fnms{Iosif} \snm{Pinelis}\thanksref{t2}\ead[label=e1]{ipinelis@mtu.edu}}
  \thankstext{t2}{Supported by NSA grant H98230-12-1-0237}
\runauthor{Iosif Pinelis}


\address{Department of Mathematical Sciences\\
Michigan Technological University\\
Houghton, Michigan 49931, USA\\
E-mail: \printead[ipinelis@mtu.edu]{e1}}
\end{aug}

\begin{abstract}
The notion of the H\"older convolution is introduced. The main result is that, under general conditions on functions $L_1,\dots,L_n$, 
one has $\li{(L_1\fJt\cdots\fJt L_n)}=\li{L_1}+\dots+\li{L_n}$, where $\fJt$ denotes the H\"older convolution and $\li 
L$ is the function inverse to the Legendre--Fenchel transform $L^*$ of a given function
$L$
. General properties of the functions $L^*$ and $\li L$ are discussed. Applications to probability theory are presented. 
In particular, an upper bound on the quantiles of the distribution of the sum of random variables is given. 
\end{abstract}

  
%

\setattribute{keyword}{AMS}{AMS 2010 subject classifications:}

\begin{keyword}[class=AMS]
\kwd[Primary ]{26A48}
\kwd{26A51}
\kwd[; secondary ]{60E15}
\end{keyword}



\begin{keyword}
\kwd{H\"older convolution}
\kwd{Legendre--Fenchel transform}
\kwd{probability inequalities}
\kwd{exponential inequalities}
\kwd{sums of random variables}
\kwd{exponential rate function}
\kwd{Cram\'er--Chernoff function}
\kwd{quantiles}
\end{keyword}

\end{frontmatter}

\settocdepth{chapter}

\tableofcontents 

\settocdepth{subsubsection}

\theoremstyle{plain} 





\section{
(Quasi)additivity properties of the Legendre--Fenchel transform and its inverse}

For brevity, let $T:=(0,\infty)$. Take any function $L\colon T\to[-\infty,\infty]$. 
To avoid unpleasant trivialities, assume that 
\begin{equation}\label{eq:ne infty}
	L(T)\ne\{\infty\}; 
\end{equation}
that is, for some $t\in T$ one has $L(t)<\infty$. 

The Legendre--Fenchel transform $L^*$ of $L$ may be defined by the formula 
\begin{equation}\label{eq:L^*:=}
	L^*(x):=\sup_{t\in T}[tx-L(t)]
\end{equation}
for all $x\in\R$, so that $L^*(x)$ may take any value on the extended real line $[-\infty,\infty]$.

Next, introduce the function $\li L$, which is the 
generalized inverse of the Legendre--Fenchel transform $L^*$, by the formula 
\begin{equation}\label{eq:li L}
	\li L(u):=\inf E_L(u),\quad\text{where}\quad E_L(u):=\{x\in\R\colon L^*(x)\ge u\}, 
\end{equation}
for all $u\in\R$ (recall that, according to the standard convention, for any subset $E$ of $\R$, $\inf E=\infty$ if and only if $E=\emptyset$). 

Take now any natural $n$ and any 
functions $L_1,\dots,L_n$ mapping $T$ into $(-\infty,\infty]$. 
Then introduce what we shall refer to as the \emph{H\"older convolution}, $L_1\fJ\cdots\fJ L_n$, by the formula 
\begin{equation*}
	(L_1\fJ\cdots\fJ L_n)(t)
	:=\inf\Big\{
	\sum_{j=1}^n\al_j L_j\Big(\frac t{\al_j}\Big)\colon(\al_1,\dots,\al_n)\in\Si_n
	\Big\} 
\end{equation*}
for all $t\in\R$, where 
\begin{equation*}
	\Si_n
	:=\big\{\textstyle{
	(\al_1,\dots,\al_n)\in(0,\infty)^n\colon \sum_{j=1}^n\al_j=1
	}
	\big\}. 
\end{equation*}
%
%
The reason for using the term ``H\"older convolution'' will be apparent later. 

At this point, let us just note the following additivity property of the H\"older convolution with respect to the family of power functions (cf.\ \cite{maligr}):  
\begin{equation}\label{eq:powers}
	p_{r,a}\fJ p_{r,b}=p_{r,a+b}
\end{equation}
for all real $r\ge1$ and $a,b\ge0$, where $p_{r,a}(t):=(at)^r$ for all $t\in T$.   
An immediate application of \eqref{eq:powers} is the following proof of the Minkowski inequality, say for any random variables (r.v.'s) $X$ and $Y$ and any real $r\ge1$ (cf.\ \cite{maligr}): 
\begin{align*}
	\|X+Y\|_r^r\le
	\E(|X|+|Y|)^r  
	&=\E p_{r,|X|+|Y|}(1) \\ 
	&=\E(p_{r,|X|}\fJ p_{r,|Y|})(1) \\ 
	&=\E\inf_{0<\be<1}\big[\be\, p_{r,|X|}\big(\tfrac1\be\big)+(1-\be)\,p_{r,|Y|}\big(\tfrac1{1-\be}\big)\big] \\ 
&\le\inf_{0<\be<1}\E\big[\be\, p_{r,|X|}\big(\tfrac1\be\big)+(1-\be)\,p_{r,|Y|}\big(\tfrac1{1-\be}\big)\big] \\
&=\inf_{0<\be<1}\big[\be\, p_{r,\|X\|_r}\big(\tfrac1\be\big)+(1-\be)\,p_{r,\|Y\|_r}\big(\tfrac1{1-\be}\big)\big] \\ 
&=(p_{r,\|X\|_r}\fJ p_{r,\|Y\|_r})(1) 
=p_{r,\|X\|_r+\|Y\|_r}(1) 
=(\|X\|_r+\|Y\|_r)^r. 
\end{align*}

The following theorem, which expresses an additivity property of the inverse of the Legendre--Fenchel transform, is the main result in this paper. 
\begin{theorem}\label{th:1}
Suppose that the condition \eqref{eq:ne infty} holds with $L_j$ in place of $L$, for each $j\in\{1,\dots,n\}$. Then 
\begin{equation}\label{eq:}
	\li{(L_1\fJ\cdots\fJ L_n)}=\li{L_1}+\dots+\li{L_n}. 
\end{equation}
\end{theorem}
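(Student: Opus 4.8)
The plan is to route everything through the auxiliary function $h_M(v):=\inf_{t>0}\frac{M(t)+v}{t}$ attached to a single function $M$: first express $\li M$ via $h_M$, then show that $h$ is additive under the H\"older convolution, and finally pass from $h$ back to $\li M$. First, observe that $L^*$ (and each $L_j^*$) is non-decreasing, since $x\mapsto tx-L(t)$ is non-decreasing for each fixed $t>0$; hence $E_L(u)$ is an up-set and $\li L(u)$ is its left endpoint. I would then prove that, for any $M\colon T\to[-\infty,\infty]$ satisfying \eqref{eq:ne infty} and any $u\in\R$,
\[
\li M(u)=\sup_{\vp>0}h_M(u-\vp),\qquad h_M(v):=\inf_{t>0}\frac{M(t)+v}{t},
\]
by a two-sided $\vp$-argument: if $M^*(x)\ge u$, then for each $\vp>0$ the inequality $M^*(x)>u-\vp$ produces some $t>0$ with $tx-M(t)>u-\vp$, i.e.\ $x>\frac{M(t)+u-\vp}{t}\ge h_M(u-\vp)$, so $x\ge\sup_{\vp>0}h_M(u-\vp)$, and one takes the infimum over $x\in E_M(u)$; conversely, if $x>\sup_{\vp>0}h_M(u-\vp)$, then for every $\vp>0$ some $t>0$ satisfies $x>\frac{M(t)+u-\vp}{t}$, hence $M^*(x)>u-\vp$ for all $\vp>0$, hence $M^*(x)\ge u$, so $x\in E_M(u)$, and one takes the infimum over such $x$ (the infimum of the resulting ray being its left endpoint). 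Here \eqref{eq:ne infty} for $M$ is used only to keep both sides finite, as it gives $h_M(u)<\infty$ and $E_M(u)\ne\emptyset$; note also that $h_M$ is non-decreasing because $t>0$.

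Second, with $L:=L_1\fJ\cdots\fJ L_n$, I would use the substitution $(t,\al_1,\dots,\al_n)\mapsto(s_1,\dots,s_n)$, $s_j:=t/\al_j$, which is a bijection from $(0,\infty)\times\Si_n$ onto $(0,\infty)^n$ with inverse $t=\big(\sum_{j}1/s_j\big)^{-1}$, $\al_j=t/s_j$. Under it $\frac1t\big(\sum_j\al_j L_j(t/\al_j)+v\big)=\sum_j\frac{L_j(s_j)+v}{s_j}$, so, using that multiplication by $1/t>0$ and addition of the constant $v$ commute with $\inf$ and that the infimum of a separated sum over a product set is the sum of the infima,
\[
h_L(v)=\inf_{t>0}\frac1t\Big(\inf_{\al\in\Si_n}\sum_{j=1}^n\al_j L_j(t/\al_j)+v\Big)=\inf_{(s_1,\dots,s_n)\in(0,\infty)^n}\sum_{j=1}^n\frac{L_j(s_j)+v}{s_j}=\sum_{j=1}^n h_{L_j}(v).
\]
No $L_j$ takes the value $-\infty$ and each $L_j\not\equiv\infty$, so there is no indeterminate $\infty-\infty$ above; moreover \eqref{eq:ne infty} holds for $L$, since for any $t_j^0$ with $L_j(t_j^0)<\infty$ and $(t,\al)$ the preimage of $(t_1^0,\dots,t_n^0)$ one has $L(t)\le\sum_j\al_j L_j(t_j^0)<\infty$, so the first step applies to $L$ as well.

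Finally, combining the two steps,
\[
\li L(u)=\sup_{\vp>0}h_L(u-\vp)=\sup_{\vp>0}\sum_{j=1}^n h_{L_j}(u-\vp)=\sum_{j=1}^n\sup_{\vp>0}h_{L_j}(u-\vp)=\sum_{j=1}^n\li{L_j}(u)
\]
for all $u\in\R$, which is the asserted identity; the third equality holds because each $h_{L_j}$ is non-decreasing, so $\vp\mapsto\sum_j h_{L_j}(u-\vp)$ is non-increasing, its supremum equals its limit as $\vp\downarrow0$, and the limit of a finite sum is the sum of the limits, each term being $\le h_{L_j}(u)<\infty$. I expect the only delicate points to be keeping track of the value $-\infty$ that $h_{L_j}$ and $\li{L_j}$ may take (for instance when some $L_j$ blows up near $0$) and checking that none of the sup/inf interchanges above produces an $\infty-\infty$ — which is exactly what condition \eqref{eq:ne infty} for each $L_j$ prevents. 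The one genuinely new ingredient is the bijective substitution $s_j=t/\al_j$ in the second step: this is precisely what makes the H\"older convolution the operation under which $h$, and therefore $\li{L}$, is additive.
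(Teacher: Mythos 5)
Your argument is correct, but it follows a different route from the paper's primary proof and in fact coincides with the alternative proof the paper itself sketches. The paper's main proof of Theorem~\ref{th:1} works with the smallest inverse directly: it establishes the sandwich $\min_j L_j^*(x_j)\le(L_1\fJ\cdots\fJ L_n)^*(x_1+\dots+x_n)\le\max_j L_j^*(x_j)$ (Proposition~\ref{prop:L^*}), translates it into the two Minkowski-sum inclusions $E_1+\dots+E_n\subseteq E$ and $E_1^\cc+\dots+E_n^\cc\subseteq E^\cc$ for the level sets of the transforms, and reads off $z=z_1+\dots+z_n$ from the endpoints of these intervals. Your proof instead goes through the largest inverse: your $h_M(v)=\inf_{t>0}\frac{M(t)+v}{t}$ is exactly the paper's $\hat M=\tli M$ (Proposition~\ref{prop:li expl}), your identity $\li M(u)=\sup_{\vp>0}h_M(u-\vp)$ is the relation $\li M(u)=\tli M(u-)$ of Proposition~\ref{prop:li,tli}, and your additivity of $h$ under $\fJ$ via the substitution $s_j=t/\al_j$ is precisely display \eqref{eq:rio12} built on the bijection \eqref{eq:1-to-1} (so this ingredient is not new to your argument; it is also what drives the paper's proof of Proposition~\ref{prop:L^*}). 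What the two routes buy is different: the paper's set-theoretic argument avoids any explicit formula and yields the min/max sandwich for $(L_1\fJ\cdots\fJ L_n)^*$ as a by-product of independent interest, while your route gives the clean closed form for $\tli L$, makes the additivity a one-line separation of variables, and isolates the only delicate step in the passage $\tli{}\to\li{}$ by a monotone limit — which you handle correctly, including the interchange of the $\vp$-limit with the finite sum and the possible $-\infty$ values (no $\infty-\infty$ can occur since \eqref{eq:ne infty} for each $L_j$ keeps every $h_{L_j}(u)<\infty$). One small bookkeeping point you rightly flag and settle: \eqref{eq:ne infty} must be verified for $L_1\fJ\cdots\fJ L_n$ itself before your first lemma is applied to it.
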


Note that the right-hand of inequality is correctly defined, in view of property \eqref{li<infty} in Proposition~\ref{prop:gen}. 

Theorem~\ref{th:1} is based on the following proposition, which appears to be of independent interest as well. 

\begin{proposition}\label{prop:L^*} 
For any real $x_1,\dots,x_n$
\begin{equation}\label{eq:L^*}
	\min\big(L_1^*(x_1),\dots,L_n^*(x_n)\big)
	\le(L_1\fJ\cdots\fJ L_n)^*(x_1+\dots+x_n)
	\le\max\big(L_1^*(x_1),\dots,L_n^*(x_n)\big). 
\end{equation}
\end{proposition}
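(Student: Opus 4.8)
**Proof plan for Proposition~\ref{prop:L^*}.**

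The plan is to work directly from the definitions, expanding $(L_1\fJ\cdots\fJ L_n)^*(x)$ with $x=x_1+\dots+x_n$ as a double supremum/infimum and then bounding it from both sides by manipulating the order of $\sup$ and $\inf$. Writing out the definition, we have
\[
(L_1\fJ\cdots\fJ L_n)^*(x)
=\sup_{t>0}\Big[tx-\inf_{\al\in\Si_n}\sum_{j=1}^n\al_j L_j\big(\tfrac t{\al_j}\big)\Big]
=\sup_{t>0}\ \sup_{\al\in\Si_n}\Big[tx-\sum_{j=1}^n\al_j L_j\big(\tfrac t{\al_j}\big)\Big].
\]
The crucial algebraic observation is that, using $x=\sum_j x_j$ and $\sum_j\al_j=1$, one can write $tx=\sum_j\al_j\cdot\tfrac t{\al_j}\cdot x_j$, so the bracketed expression becomes $\sum_{j=1}^n\al_j\big[s_j x_j-L_j(s_j)\big]$ where $s_j:=t/\al_j>0$. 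This is the key step and the main point of the argument; everything else is routine.

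For the upper bound, since $s_j x_j-L_j(s_j)\le L_j^*(x_j)$ for every $j$ by the definition \eqref{eq:L^*:=}, and since $\al_j\ge0$ with $\sum_j\al_j=1$, the sum $\sum_j\al_j[s_j x_j-L_j(s_j)]$ is a convex combination and hence is $\le\max_j L_j^*(x_j)$; taking the supremum over $t>0$ and $\al\in\Si_n$ preserves this bound. (One should note \eqref{eq:ne infty} for each $L_j$ guarantees $L_j^*(x_j)>-\infty$ is not forced but at least the quantities are well defined in $[-\infty,\infty]$; the inequality $\le\max$ holds in the extended sense regardless.) For the lower bound, fix any index $i$ and any $s>0$. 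Choose $t$ and $\al$ so that $s_i=t/\al_i=s$; concretely, one can let $\al_i\to1$ and $\al_j\to0$ for $j\ne i$ along a sequence, with $t=\al_i s$, so that $s_i=s$ stays fixed while $s_j=t/\al_j\to\infty$. The term $\al_i[s_i x_i-L_i(s_i)]\to s x_i-L_i(s)$, and each remaining term $\al_j[s_j x_j-L_j(s_j)]$ — this is where care is needed — must be controlled so that its $\liminf$ is $\ge0$, or at least does not decrease the limit below $s x_i - L_i(s)$.

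The main obstacle is precisely this last point: showing that the "leftover" terms $\al_j L_j(t/\al_j)$ with $\al_j\to0^+$ and $t/\al_j\to\infty$ do not blow up to $+\infty$ (which would sabotage the lower bound) — equivalently, that $\liminf_{\al_j\to0^+}\al_j L_j(t/\al_j)\le 0$ along a suitable choice, since $L_j$ maps into $(-\infty,\infty]$ and could in principle grow superlinearly. The resolution is to pick, for each $j\ne i$, a point $\tau_j>0$ with $L_j(\tau_j)<\infty$ (possible by \eqref{eq:ne infty}) and route the sequence through $s_j=\tau_j$, i.e. set $\al_j = t/\tau_j$; then $\al_j[s_j x_j - L_j(s_j)] = (t/\tau_j)[\tau_j x_j - L_j(\tau_j)] \to 0$ as $t\to0^+$, while still being able to keep $s_i$ equal to a prescribed value by adjusting $\al_i=1-\sum_{j\ne i}\al_j\to1$ and $t=\al_i s_i\cdot(\text{correction})$. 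A cleaner bookkeeping: parametrize by small $\vp>0$, set $\al_j=\vp/\tau_j$ for $j\ne i$ (so $\sum_{j\ne i}\al_j=\vp\sum_{j\ne i}1/\tau_j\to0$), set $\al_i=1-\sum_{j\ne i}\al_j$, and set $t=\al_i s$; then $s_i=t/\al_i=s$, $s_j=\tau_j$ for $j\ne i$, and letting $\vp\to0^+$ gives $\sum_j\al_j[s_j x_j-L_j(s_j)]\to s x_i-L_i(s)$. Taking $\sup$ over $s>0$ yields $(L_1\fJ\cdots\fJ L_n)^*(x)\ge L_i^*(x_i)$, and then maximizing over $i$ is not what we want — rather, since $i$ was arbitrary we get $\ge L_i^*(x_i)$ for each $i$, hence $\ge\max_i L_i^*(x_i)$; but the proposition only claims $\ge\min_i$, so this in fact gives a stronger lower bound, and a fortiori the stated one. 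I would double-check the edge cases where some $L_i^*(x_i)=+\infty$ or $=-\infty$ to confirm the limiting arguments degrade gracefully, but these present no real difficulty.
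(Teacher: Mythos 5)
Your rewriting of $t(x_1+\dots+x_n)$ as $\sum_j\al_j\,[s_jx_j-L_j(s_j)]$ with $s_j=t/\al_j$, and the resulting upper bound via the convex-combination-$\le$-max argument, are correct and are exactly the paper's route. The lower bound, however, contains a genuine error. Your construction purports to prove the stronger inequality $(L_1\fJ\cdots\fJ L_n)^*(x_1+\dots+x_n)\ge\max_j L_j^*(x_j)$, which is false in general: it contradicts your own upper bound whenever the $L_j^*(x_j)$ are not all equal, and concretely, for $n=2$, $L_1(t)=L_2(t)=t^2/2$, $x_1=2$, $x_2=-2$, one has $(L_1\fJ L_2)(t)=2t^2$, so $(L_1\fJ L_2)^*(0)=0$, while $\max\big(L_1^*(2),L_2^*(-2)\big)=\max(2,0)=2$. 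The specific flaw is that you treat the weights $\al_j$ and the arguments $s_j$ as independently choosable, but they are tied by $\al_js_j=t$ for every $j$ (equivalently, $\al_j=\frac{1/s_j}{1/s_1+\dots+1/s_n}$ is determined by $(s_1,\dots,s_n)$). In your ``cleaner bookkeeping'' with $\al_j=\vp/\tau_j$ for $j\ne i$, $\al_i=1-\sum_{j\ne i}\al_j$ and $t=\al_i s$, one gets $s_j=t/\al_j=\al_i s\,\tau_j/\vp\to\infty$, not $s_j=\tau_j$; the terms $\al_js_jx_j=\al_i s\,x_j$ do not vanish as $\vp\to0$, and $\al_jL_j(s_j)$ can even diverge. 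Even in the most favorable case the limit of the sum is $s(x_1+\dots+x_n)-L_i(s)$, whose supremum over $s>0$ is $L_i^*(x_1+\dots+x_n)$, not $L_i^*(x_i)$, so the argument does not deliver $\ge L_i^*(x_i)$ for any fixed $i$ and hence does not even yield the stated $\ge\min_jL_j^*(x_j)$.

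The correct (and simpler) lower bound, which is how the paper argues, uses the convex-combination structure in the other direction: pointwise, $\sum_j\al_j(\tt)\,[t_jx_j-L_j(t_j)]\ge\min_{1\le j\le n}[t_jx_j-L_j(t_j)]$, so taking the supremum over all $(t_1,\dots,t_n)\in(0,\infty)^n$ gives at least $\sup_{\tt}\min_j[t_jx_j-L_j(t_j)]$, and this is $\ge\min_j\sup_{t_j>0}[t_jx_j-L_j(t_j)]=\min_jL_j^*(x_j)$, because the coordinates can be optimized simultaneously: for each $j$ pick $t_j$ with $t_jx_j-L_j(t_j)>v_j$ for any prescribed $v_j<L_j^*(x_j)$. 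Replacing your limiting construction by this two-line argument repairs the proof; no delicate control of ``leftover'' terms is needed, and condition \eqref{eq:ne infty} enters only to keep the suprema meaningful.
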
 

To quickly appreciate the relevance of Proposition~\ref{prop:L^*} regarding Theorem~\ref{th:1}, one can first make the easy observation that the function $L^*$ is always nondecreasing (cf.\ Proposition~\ref{prop:gen}\eqref{L^* gen}). 
Consider now the easier case when the functions $L_1^*,\dots,L_n^*$ and $(L_1\fJ\cdots\fJ L_n)^*$ are continuous and strictly increasing. Suppose then that for some real $u$ and $x_1,\dots,x_n$ and for all $j\in\{1,\dots,n\}$ one has $L_j^*(x_j)=u$, so that $x_j=\li L(u)$. Then Proposition~\ref{prop:L^*} yields $(L_1\fJ\cdots\fJ L_n)^*(x_1+\dots+x_n)=u$ and hence 
$\li{(L_1\fJ\cdots\fJ L_n)}(u)=x_1+\dots+x_n=\sum_1^n\li{L_j}(u)$; cf.\ \eqref{eq:}. 
Of course, here there will be some technical difficulties to overcome, since in general we do not assume the additional conditions that the functions $L_1^*,\dots,L_n^*$ and $(L_1\fJ\cdots\fJ L_n)^*$ are continuous and strictly increasing and all the equations $L_j^*(x_j)=u$ for $x_j$ have solutions for all real $u$. 
However, as shown in the proof of Theorem~\ref{th:1} below, these difficulties are not excessive. 

\begin{proof}[Proof of Proposition~\ref{prop:L^*}] 
Take indeed any real $x_1,\dots,x_n$. 

Consider the bijective correspondence 
\begin{equation}\label{eq:1-to-1} 
	(0,\infty)\times\Si_n\ni(t,\al_1,\dots,\al_n)\longleftrightarrow
	\tt:=(t_1,\dots,t_n):=\big(\tfrac t{\al_1},\dots\tfrac t{\al_n}\big)\in(0,\infty)^n,  
\end{equation}
under which one has  
$\al_j=\al_j(\tt):=\frac{1/t_j}{1/t_1+\dots+1/t_n}$ for all $j\in\{1,\dots,n\}$ and $t=\frac1{1/t_1+\dots+1/t_n}$.  

It follows that for 
\begin{equation}\label{eq:L:=}
	L:=L_1\fJ\cdots\fJ L_n  
\end{equation} 
one has 
\begin{align*}
	L^*\big(\textstyle{\sum_1^n}x_j\big)&=\sup_{t>0}\big[t\,\textstyle{\sum_1^n}x_j-L(t)\big] \\ 
	&=\sup\big\{t\,\textstyle{\sum_1^n}x_j-\textstyle{\sum_1^n}\al_j L_j\big(\tfrac t{\al_j}\big)\colon
	(t,\al_1,\dots,\al_n)\in(0,\infty)\times\Si_n\big\} \\ 
	&=\sup\big\{\textstyle{\sum_1^n}
	\al_j(\tt)\,[t_jx_j-L_j(t_j)]\colon 
	(t_1,\dots,t_n)\in(0,\infty)^n\big\} \\
	&\le\sup\big\{\max_{1\le j\le n}
	[t_jx_j-L_j(t_j)]\colon 
	(t_1,\dots,t_n)\in(0,\infty)^n\big\} \\
	&=\max_{1\le j\le n}\sup_{t_j>0}[t_jx_j-L_j(t_j)]
	=\max_{1\le j\le n}L_j^*(x_j),  	
\end{align*}
which proves the second inequality in \eqref{eq:L^*}. 
Somewhat similarly, the expression in the third line of the above multi-line display is no less than 
\begin{equation}\label{eq:last}
\sup\big\{\min_{1\le j\le n}
	[t_jx_j-L_j(t_j)]\colon 
	(t_1,\dots,t_n)\in(0,\infty)^n\big\}\ge\min_{1\le j\le n}\sup_{t_j>0}[t_jx_j-L_j(t_j)]
=\min_{1\le j\le n}L_j^*(x_j).  	
\end{equation}
To verify the inequality in \eqref{eq:last} (which is in fact an equality), for each $j\in\{1,\dots,n\}$ take any real $v_j<\sup_{t_j>0}[t_jx_j-L_j(t_j)]$ and then take any real $t_j>0$ such that $t_jx_j-L_j(t_j)>v_j$, whence the supremum in \eqref{eq:last} is greater than $\min_{1\le j\le n}v_j$, for any real numbers $v_j$ less than $\sup_{t_j>0}[t_jx_j-L_j(t_j)]$. Thus, the first inequality in \eqref{eq:L^*} is proved as well. 
%
%
%
%
\end{proof}

Now one is ready to complete

\begin{proof}[Proof of Theorem~\ref{th:1}] 
Take any $u\in\R$. 
Recall the definition of the set $E_L(u)$ in \eqref{eq:li L} and let, for brevity,
\begin{equation}\label{eq:z_j}
	E:=E_L(u),\quad z:=\inf E, \quad E_j:=E_{L_j}(u),\quad z_j:=\inf E_j, 
\end{equation}
where $L$ is as in \eqref{eq:L:=} and $j\in\{1,\dots,n\}$. 
Since the functions $L^*,L^*_1,\dots,L^*_n$ are nondecreasing, each of the sets $E,E_1,\dots,E_n$ is an interval in $\R$ whose right endpoint is $\infty$, and the complements $E^\cc,E_1^\cc,\dots,E_n^\cc$ of these sets are intervals whose left endpoint is $-\infty$. 

On the other hand, by Proposition~\ref{prop:L^*}, the Minkowski sum $E_1+\dots+E_n$ of the sets $E_1,\dots,E_n$ is a subset of $E$, which yields $z_1+\dots+z_n\ge z$. 
Quite similarly, $E_1^\cc+\dots+E_n^\cc\subseteq E^\cc$, which yields $z_1+\dots+z_n\le z$, since $\sup E_j^\cc=\inf E_j=z_j$ for all $j$ and $\sup E^\cc=\inf E=z$. 
We conclude that $z_1+\dots+z_n=z$. 
It remains to recall the definitions in \eqref{eq:z_j} and \eqref{eq:li L}. 
\end{proof}



\begin{corollary}\label{th:2}
In the conditions of Theorem~\ref{th:1}, 
suppose also that, for each $j\in\{1,\dots,n\}$, $L_j(t)/t$ is nondecreasing in $t>0$    
\big(in particular, this will be the case when $L_j$ is convex with $L_j(0+)\le0$\big). 
Then 
\begin{equation*}
	\li{(L_1+\dots+L_n)}\le\li{L_1}+\dots+\li{L_n}. 
\end{equation*}
\end{corollary}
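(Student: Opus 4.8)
The plan is to deduce the corollary from Theorem~\ref{th:1} by comparing the ordinary sum $L_1+\dots+L_n$ with the H\"older convolution $L_1\fJ\cdots\fJ L_n$, and then applying monotonicity of the map $L\mapsto\li L$ with respect to the pointwise order on functions. So the argument splits into three pieces: (i) establish the pointwise inequality $(L_1\fJ\cdots\fJ L_n)(t)\le(L_1+\dots+L_n)(t)$ for all $t\in T$; (ii) establish that $M\le N$ pointwise on $T$ implies $\li M\le\li N$ pointwise on $\R$; (iii) combine these with Theorem~\ref{th:1} to get $\li{(L_1+\dots+L_n)}\ge\li{(L_1\fJ\cdots\fJ L_n)}=\li{L_1}+\dots+\li{L_n}$ --- wait, the inequality goes the wrong way, so let me reconsider which direction the convolution bound takes. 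In fact the correct direction is $(L_1+\dots+L_n)(t)\le(L_1\fJ\cdots\fJ L_n)(t)$: this is where the hypothesis ``$L_j(t)/t$ nondecreasing'' is used. Indeed, for any $(\al_1,\dots,\al_n)\in\Si_n$ and $t>0$ one has $\al_j\,L_j(t/\al_j)=t\cdot\frac{L_j(t/\al_j)}{t/\al_j}\ge t\cdot\frac{L_j(t)}{t}=L_j(t)$, since $t/\al_j\ge t$ and $L_j(\cdot)/(\cdot)$ is nondecreasing; summing over $j$ and taking the infimum over $\Si_n$ gives $(L_1\fJ\cdots\fJ L_n)(t)\ge\sum_{j=1}^n L_j(t)=(L_1+\dots+L_n)(t)$.

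For step (ii), I would argue directly from the definitions. If $M\le N$ pointwise on $T$, then $M^*(x)=\sup_{t\in T}[tx-M(t)]\ge\sup_{t\in T}[tx-N(t)]=N^*(x)$ for every $x\in\R$; hence $E_N(u)=\{x\colon N^*(x)\ge u\}\subseteq\{x\colon M^*(x)\ge u\}=E_M(u)$ for every $u$, and therefore $\li M(u)=\inf E_M(u)\le\inf E_N(u)=\li N(u)$. (The set-theoretic convention $\inf\emptyset=\infty$ is compatible with this, and the condition \eqref{eq:ne infty} for each $L_j$ guarantees it holds for $L_1+\dots+L_n$ as well, since the finiteness set of a sum of functions bounded below by $-\infty$-avoiding... more carefully: one picks a common $t$ at which all $L_j$ are finite, using that each $E_{L_j}$-type argument or simply that \eqref{eq:ne infty} is needed only to legitimize $(L_1+\dots+L_n)^*$ and $\li{(L_1+\dots+L_n)}$; this is a routine check.)

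Combining: with $M:=L_1+\dots+L_n$ and $N:=L_1\fJ\cdots\fJ L_n$, step (i) gives $M\le N$ pointwise, step (ii) gives $\li M\le\li N$, and Theorem~\ref{th:1} identifies $\li N=\li{L_1}+\dots+\li{L_n}$; chaining these yields $\li{(L_1+\dots+L_n)}\le\li{L_1}+\dots+\li{L_n}$, which is the claim. Finally, the parenthetical remark --- that convexity of $L_j$ with $L_j(0+)\le0$ forces $L_j(t)/t$ nondecreasing --- is the standard fact that for a convex function vanishing (or nonpositive) at the left endpoint, the slope from the origin $t\mapsto(L_j(t)-L_j(0+))/t$ is nondecreasing, and $L_j(t)/t\ge(L_j(t)-L_j(0+))/t$; I would dispatch it in one line. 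I expect the only mildly delicate point to be step (i), specifically making sure the manipulation $\al_j L_j(t/\al_j)\ge L_j(t)$ is valid including the possibility that some values are $+\infty$ (harmless) and that the infimum over $\Si_n$ is handled correctly; everything else is bookkeeping.
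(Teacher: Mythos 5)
Your argument is correct and is essentially the paper's own proof: the hypothesis gives $\al_j L_j(t/\al_j)=t\,\frac{L_j(t/\al_j)}{t/\al_j}\ge t\,\frac{L_j(t)}{t}=L_j(t)$, hence $L_1+\dots+L_n\le L_1\fJ\cdots\fJ L_n$ pointwise, and then the order-reversal $M\le N\Rightarrow M^*\ge N^*\Rightarrow\li M\le\li N$ combined with Theorem~\ref{th:1} yields the claim (the paper compresses exactly this into one sentence). One cosmetic quibble: for the parenthetical remark, the bound $L_j(t)/t\ge\bigl(L_j(t)-L_j(0+)\bigr)/t$ by itself does not give monotonicity; instead write $L_j(t)/t=\bigl(L_j(t)-L_j(0+)\bigr)/t+L_j(0+)/t$, a sum of two nondecreasing functions since $L_j$ is convex and $L_j(0+)\le0$.
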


This follows immediately from Theorem~\ref{th:1}; indeed, if $L_j(t)/t$ is nondecreasing in $t>0$ for each $j\in\{1,\dots,n\}$, then $L_1+\dots+L_n\le L_1\fJ\dots\fJ L_n$. 


Instead of the definition \eqref{eq:li L} of the (smallest possible) version, $\li L$, of the generalized inverse of the $L^*$, one can consider the following largest possible version of it, given 
by the formula 
\begin{equation}\label{eq:tli L}
	\tli L(u):=\inf\{x\in\R\colon L^*(x)>u\}
\end{equation}
for all $u\in\R$. 
The functions $\li L$ and $\tli L$ are closely related: 

\begin{proposition}\label{prop:li,tli} 
For any $u\in\R$
\begin{equation*}
	\tli L(u)=\li L(u+)\quad\text{and}\quad\li L(u)=\tli L(u-). 
\end{equation*}
\end{proposition}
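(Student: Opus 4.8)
The plan is to lean on two features of $L^*$: it is nondecreasing (Proposition~\ref{prop:gen}\eqref{L^* gen}), and, by \eqref{eq:ne infty} and \eqref{eq:L^*:=}, it takes values in $(-\infty,\infty]$. From these it follows at once that both $\li L$ and $\tli L$ are nondecreasing on $\R$, being infima of families of subsets of $\R$ that shrink as the argument grows; hence the one-sided limits $\li L(u\pm)$, $\tli L(u\pm)$ exist in $[-\infty,\infty]$ and may be computed as $\li L(u+)=\inf_{v>u}\li L(v)$, $\li L(u-)=\sup_{v<u}\li L(v)$, and similarly for $\tli L$. The workhorse will be the trivial chain of inclusions $E_L(u)=\{x\in\R\colon L^*(x)\ge u\}\subseteq\{x\in\R\colon L^*(x)>v\}\subseteq E_L(v)$, valid whenever $v<u$, which yields $\li L(v)\le\tli L(v)\le\li L(u)$ for all $v<u$.

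For the identity $\tli L(u)=\li L(u+)$: the inequality $\tli L(u)\le\li L(u+)$ follows from the above, since for $v>u$ one has $E_L(v)\subseteq\{x\colon L^*(x)>u\}$, hence $\tli L(u)\le\li L(v)$, and I then take the infimum over $v>u$. For the reverse inequality $\tli L(u)\ge\li L(u+)$ I would go back to the definitions: given any $x$ with $L^*(x)>u$, pick a real $v$ with $u<v\le L^*(x)$ (for instance $v:=\min(u+1,L^*(x))$); then $x\in E_L(v)$, so $x\ge\li L(v)\ge\li L(u+)$, and taking the infimum over all such $x$ gives the claim. This half needs neither monotonicity of $L^*$ nor any finiteness.

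For the identity $\li L(u)=\tli L(u-)$: the inequality $\tli L(u-)\le\li L(u)$ is again immediate, because $\tli L(v)\le\li L(u)$ for every $v<u$. The reverse inequality $\tli L(u-)\ge\li L(u)$ is the heart of the matter and the one place where monotonicity of $L^*$ is genuinely used. I would prove it by contradiction: assuming $\li L(u)>-\infty$ (the case $\li L(u)=-\infty$ being vacuous) and $\tli L(u-)<\li L(u)$, fix a real $b$ with $\tli L(u-)<b<\li L(u)$. Then $b<\inf E_L(u)$, so $b\notin E_L(u)$, i.e.\ $L^*(b)<u$; but also $b>\tli L(v)=\inf\{x\colon L^*(x)>v\}$ for every $v<u$, so for each such $v$ there is $x<b$ with $L^*(x)>v$, whence, by monotonicity of $L^*$, $L^*(b)\ge L^*(x)>v$, and letting $v\uparrow u$ gives $L^*(b)\ge u$, a contradiction. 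The case $\li L(u)=+\infty$ is covered by the same argument with $b$ any real exceeding $\tli L(u-)$, using that $E_L(u)=\emptyset$ forces $L^*(b)<u$.

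The hard part will be exactly this last inequality $\tli L(u-)\ge\li L(u)$: it is the only step that is not mere manipulation of set inclusions, since it is where the monotonicity of $L^*$ must be married to an $\varepsilon$-type extraction of a near-infimal point of $\{x\colon L^*(x)>v\}$. Everything else is routine, the only real care being the systematic handling of the infinite values that $L^*$, $\li L$, and $\tli L$ may take.
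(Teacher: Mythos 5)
Your proof is correct and is essentially the paper's argument in unpacked form: your two easy inequalities amount to the paper's identities $\tE_u=\bigcup_{v>u}E_v$, $E_u=\bigcap_{w<u}\tE_w$ together with the monotonicity of $\li L$ and $\tli L$, while your contradiction argument for $\tli L(u-)\ge\li L(u)$ is precisely the content of the paper's observation that, by the monotonicity of $L^*$, the sets $\{x\in\R\colon L^*(x)>w\}$ are upper intervals, so the infimum of their intersection equals the supremum of their infima. Your treatment of the infinite cases is also sound, so there is no gap.
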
 

\begin{proof}[Proof of Proposition~\ref{prop:li,tli}]
Introduce the sets 
\begin{equation*}
	E_u:=\{x\in\R\colon L^*(x)\ge u\}\quad\text{and}\quad\tE_u:=\{x\in\R\colon L^*(x)>u\}; 
\end{equation*}
here and subsequently in this proof, $u$ stands for an arbitrary real number. 
Then 
$\tE_u=\bigcup_{v>u}E_v$ and $E_u=\bigcap_{w<u}\tE_w$. Hence, 
\begin{equation}\label{eq:li,tli}
	\tli L(u)=\inf\tE_u=\inf_{v>u}\inf E_v
	=\inf_{v>u}\li L(v)=\li L(u+); 
\end{equation}
the second equality here follows because $\inf\bigcup\limits_{S\in\SS}S=\inf\limits_{S\in\SS}\inf S$ for any set $\SS$ of subsets of $\R$, whereas the last equality in \eqref{eq:li,tli} is due to the function $\li L$ being nondecreasing \big(see Proposition~\ref{prop:gen}\eqref{li L}\big). 
Somewhat similarly, 
\begin{equation*}
	\li L(u)=\inf E_u=\sup_{w<u}\inf\tE_w
	=\sup_{w<u}\tli L(w)=\tli L(u-); 
\end{equation*}
the second equality here follows because, by Proposition~\ref{prop:gen}\eqref{L^* gen}, the sets $\tE_w$ are intervals in $\R$ with the right endpoint equal $\infty$, and $\inf\bigcap\limits_{I\in\II}I=\sup\limits_{I\in\II}\inf I$ for any set $\II$ of such intervals. 
\end{proof}

It follows by Proposition~\ref{prop:li,tli} and Proposition~\ref{prop:gen}\eqref{li<infty} that Theorem~\ref{th:1} and Corollary~\ref{th:2} 
hold with $\tli{L_j}$ in place of $\li L_j$. It further follows that these results will hold for any ``weighted'' generalized inverses of the form $(1-\al)\li L_j+\al\tli{L_j}$, for any fixed $\al\in[0,1]$. 
However, in applications in probability as the one to be presented in Corollary~\ref{cor:}, one should prefer to use $\li L_j$, the smallest member of this family of generalized inverses -- because, at least formally, this choice maximizes the left-hand side of inequality \eqref{eq:cor} and thus provides the strongest version of that inequality. 

On the other hand, it is straightforward to modify the proof of Theorem~\ref{th:1} so that to obtain its counterpart for the largest generalized inverse directly, rather than via Proposition~\ref{prop:li,tli}. 
Also, Proposition~\ref{prop:li,tli} allows one to obtain an alternative proof of Theorem~\ref{th:1} (as originally stated, for the smallest generalized inverse), based on the following proposition (for the largest generalized inverse). 

\begin{proposition}\label{prop:li expl} 
For any $u\in\R$
\begin{equation}\label{eq:tli=}
	\tli L(u)=\hat L(u):=\inf_{t>0}\frac{u+L(t)}t. 
\end{equation}
\end{proposition}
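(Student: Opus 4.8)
The plan is to prove the identity $\tli L(u)=\hat L(u)$ for each fixed $u\in\R$ by establishing the two inequalities $\tli L(u)\le\hat L(u)$ and $\tli L(u)\ge\hat L(u)$ separately, working directly from the definition $\tli L(u)=\inf\{x\colon L^*(x)>u\}$ and the definition $L^*(x)=\sup_{t>0}[tx-L(t)]$.

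For the inequality $\tli L(u)\le\hat L(u)$, I would show that every number of the form $\frac{u+L(t)}t$ with $t>0$ and $L(t)<\infty$ lies in the closure of the set $\tE_u:=\{x\colon L^*(x)>u\}$, or more precisely that $\inf\tE_u\le\frac{u+L(t)}t$ for each such $t$. Fix $t>0$ with $L(t)<\infty$ and set $x_0:=\frac{u+L(t)}t$. For any $x>x_0$ we have $tx-L(t)>tx_0-L(t)=u$, hence $L^*(x)\ge tx-L(t)>u$, so $x\in\tE_u$; letting $x\downarrow x_0$ gives $\tli L(u)=\inf\tE_u\le x_0$. Taking the infimum over all admissible $t>0$ (these exist by the standing assumption \eqref{eq:ne infty} applied to $L$) yields $\tli L(u)\le\hat L(u)$; the values of $t$ with $L(t)=\infty$ only make $\frac{u+L(t)}t=\infty$ and so do not affect the infimum.

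For the reverse inequality $\tli L(u)\ge\hat L(u)$, I would take any $x\in\tE_u$, so that $L^*(x)>u$, and produce $t>0$ with $\frac{u+L(t)}t\le x$, which shows $\hat L(u)\le x$ and hence $\hat L(u)\le\inf\tE_u=\tli L(u)$ (the case $\tE_u=\emptyset$ being trivial since then $\tli L(u)=\infty$). Since $L^*(x)=\sup_{t>0}[tx-L(t)]>u$, there is some $t>0$ with $tx-L(t)>u$, and in particular $L(t)<\infty$; rearranging gives $tx>u+L(t)$, i.e.\ $x>\frac{u+L(t)}t\ge\hat L(u)$. Combining the two bounds gives $\tli L(u)=\hat L(u)$.

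I do not expect a serious obstacle here; the only points requiring care are the bookkeeping with the extended-real-valued $L$ (making sure terms with $L(t)=\infty$ are handled, and that the nonemptiness hypothesis \eqref{eq:ne infty} guarantees $\hat L(u)<\infty$ is at least consistent with the statement) and the distinction between $\inf\tE_u$ and membership in $\tE_u$ — which is why the strict inequalities $x>x_0$ and $tx-L(t)>u$ are used rather than their non-strict versions. The argument is essentially a direct unwinding of the two infima/suprema, mirroring the relationship between $\tli L$ and $L^*$.
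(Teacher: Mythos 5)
Your argument is correct and is essentially the paper's own proof: the paper establishes the equivalence $L^*(x)>u\iff x>\hat L(u)$ by a one-line chain of equivalences and then invokes the definition of $\tli L$, while you unwind the same equivalence into the two inequalities $\tli L(u)\le\hat L(u)$ and $\tli L(u)\ge\hat L(u)$. The extra bookkeeping you flag (terms with $L(t)=\infty$, empty $\tE_u$, strict inequalities) is handled appropriately, so no gap.
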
 

Indeed, for $L$ as in \eqref{eq:L:=} and 
for any $u\in\R$, twice using Proposition~\ref{prop:li expl} \big(and also the bijective correspondence in \eqref{eq:1-to-1}\big) one has 
\begin{equation}\label{eq:rio12}
\begin{aligned}
\tli L(u)
=\hat L(u)
=\inf_{t>0}\tfrac{u+L(t)}t
&=\inf\big\{
       \tfrac1t\,\big[u+\textstyle{\sum_1^n}\al_j L_j\big(\tfrac t{\al_j}\big)\big]
       \colon(t,\al_1,\dots,\al_n)\in(0,\infty)\times\Si_n\} \\ 
&=\inf\big\{
       \textstyle{\sum_1^n}\tfrac{u+L_j(t_j)}{t_j}
       \colon(t_1,\dots,t_n)\in(0,\infty)^n\} \\ 
&=\textstyle{\sum_1^n}\inf_{t_j>0}\frac{u+L_j(t_j)}{t_j}
=\textstyle{\sum_1^n}\tli{L_j}(u),   
\end{aligned}	
\end{equation}
so that one has \eqref{eq:} with $\tli{L_j}$ instead of $\li{L_j}$. 


Let us now present  

\begin{proof}[Proof of Proposition~\ref{prop:li expl}] 
For any real $u$ and $x$  
\begin{equation*}
	L^*(x)>u\iff\big(\exists t>0\ \,tx-L(t)>u\big)\iff\big(\exists t>0\ \,x>\tfrac{u+L(t)}t\big)\iff x>\inf_{t>0}\tfrac{u+L(t)}t;  
\end{equation*}
thus, $L^*(x)>u\iff x>\hat L(u)$. It remains to recall the definition \eqref{eq:tli L} of $\tli L$. 
\end{proof}


Identity \eqref{eq:tli=} was given by Rio \cite[page~159]{rio00} and \cite[(4)]{rio12} in the case when the function $L$ is convex, nondecreasing, left-continuous, and with $L(0)=0$; also, it was tacitly assumed in the proof there that the infimum in \eqref{eq:tli=} is attained and the function $L^*$ is strictly increasing. 

\section{Applications to probability} \label{prob} 

Take any real-valued random variable (r.v.) $X$ and consider the corresponding Cram\'er--Chernoff function $L_X$ defined by the formula 
\begin{equation}\label{eq:L_X}
	L_X(t):=\ln\E e^{tX}
\end{equation}
for all $t\in(0,\infty)$; thus, $L_X$ is the logarithm of the moment generating function of $X$. 
Note that $L_X(t)\in(-\infty,\infty]$ for all $t\in(0,\infty)$. 
Note also that the condition \eqref{eq:ne infty} is satisfied with $L_X$ in place of $L$ if and only if 
\begin{equation}\label{eq:<infty}
	\E e^{tX}<\infty \text{ for some $t\in T$,} 
\end{equation}
which will be henceforth assumed; then, by previous discussion, $L_X^*(x)>-\infty$ and $\li L_X(u)<\infty$ for all real $x$ and $u$.  
 
By Markov's inequality, 
\begin{equation}\label{eq:tail<}
	\P(X\ge x)\le\exp\{-L_X^*(x)\}  
\end{equation}
for all $x\in\R$, with the convention $\exp\{-\infty\}:=0$. 

Take now any real $u$ and then any $z\in(y,\infty)$, where $y:=\li L(u)$ and $L:=L_X$. 
Then 
there is $x_z\in(-\infty,z)$ such that $L^*(x_z)\ge u$. 
Since $L^*$ is nondecreasing, it follows that $L^*(z)\ge u$, for all $z\in(y,\infty)$. 
So, 
$
	\P(X>y)=\lim_{z\downarrow y}\P(X>z)
	\le\limsup_{z\downarrow y}\exp\{-L^*(z)\}\le e^{-u}. 
$ 
Therefore, 
\begin{equation}\label{eq:>,<e^-u}
	\P\!\big(X>\li{L_X}(u)\big)\le e^{-u} 
\end{equation}
for all real $u$. 
Thus, $\li{L_X}\big(-\ln(1-q)\big)$ may be considered as an upper bound on the $q$-quantile of the distribution of $X$, for any $q\in(0,1)$. 

Comparing \eqref{eq:tail<} and \eqref{eq:>,<e^-u}, one may ask as to whether 
one can write 
\begin{equation}\label{eq:ge,<e^-u}
	\P\!\big(X\ge\li{L_X}(u)\big)\le e^{-u}  
\end{equation} 
for all real $u$. 
A complete answer to this question is given by


\begin{proposition}\label{prop:} 
Take any $u\in\R$. 
Then inequality \eqref{eq:ge,<e^-u} fails to hold if and only if all of the following conditions take place: 
\begin{enumerate}[(i)]
	\item \label{x max<infty} $x_{\max}:=\sup\supp X<\infty$ (here, as usual, $\supp X$ denotes the support set of the distribution of $X$); 
	\item \label{p max>0} $p_{\max}:=\P(X=x_{\max})>0$; 
	\item \label{u ge} $u>-\ln p_{\max}$.  
\end{enumerate}
\end{proposition}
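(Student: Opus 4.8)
The plan is to analyze directly the relationship between $\li{L_X}(u)$ and $x_{\max}$, using the explicit description of $L_X^*$ near $x_{\max}$. First I would recall that $L_X^*$ is nondecreasing, so that $E_{L_X}(u)=\{x:L_X^*(x)\ge u\}$ is an interval with right endpoint $\infty$ and $\li{L_X}(u)=\inf E_{L_X}(u)=:y$. The key preliminary computation is the behavior of $L_X^*$ at the top of the support: for $x\ge x_{\max}$ (when $x_{\max}<\infty$) one has $tx-L_X(t)=tx-\ln\E e^{tX}\ge tx-\ln(e^{tx_{\max}})=t(x-x_{\max})\ge 0$, and more precisely $\E e^{tX}\ge p_{\max}e^{tx_{\max}}$ gives $tx-L_X(t)\le t(x-x_{\max})-\ln p_{\max}$; letting $t\downarrow 0$ and $t\to\infty$ appropriately one shows $L_X^*(x_{\max})=-\ln p_{\max}$ if $p_{\max}>0$, and $L_X^*(x)=\infty$ for $x>x_{\max}$, while $L_X^*(x)=+\infty$ for $x>x_{\max}$ in all cases where $x_{\max}<\infty$. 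If $x_{\max}=\infty$, then $L_X^*(x)<\infty$ for all $x$ but one still needs to control whether the sup defining $\li{L_X}(u)$ is attained.

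Next I would prove the ``if'' direction: assuming (i)--(iii), show \eqref{eq:ge,<e^-u} fails. Under (i)--(ii), the above computation gives $L_X^*(x_{\max})=-\ln p_{\max}<u$ by (iii), whereas $L_X^*(x)=\infty\ge u$ for every $x>x_{\max}$; hence $E_{L_X}(u)=(x_{\max},\infty)$ and $y=\li{L_X}(u)=x_{\max}$. But then $\P(X\ge y)=\P(X\ge x_{\max})=p_{\max}$, and (iii) says $p_{\max}>e^{-u}$, so \eqref{eq:ge,<e^-u} fails.

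For the ``only if'' direction I would prove the contrapositive: if at least one of (i)--(iii) fails, then \eqref{eq:ge,<e^-u} holds. There are three cases. If (iii) fails, i.e.\ $u\le-\ln p_{\max}$ (this presupposes $p_{\max}>0$; if $p_{\max}=0$ interpret $-\ln p_{\max}=\infty$ and this case is vacuous), then... actually it is cleaner to organize as: Case A, $x_{\max}=\infty$; Case B, $x_{\max}<\infty$ and $p_{\max}=0$; Case C, $x_{\max}<\infty$, $p_{\max}>0$, and $u\le-\ln p_{\max}$. In each case one must show $\P(X\ge y)\le e^{-u}$ where $y=\li{L_X}(u)$. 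The clean unifying observation is that \eqref{eq:>,<e^-u} already gives $\P(X>y)\le e^{-u}$, so it suffices to show either $\P(X=y)=0$ or that $L_X^*(y)\ge u$ (since $L_X^*(y)\ge u$ gives $\P(X\ge y)\le e^{-L_X^*(y)}\le e^{-u}$ by \eqref{eq:tail<}). In Cases A and B one argues $\P(X=y)=0$: if $x_{\max}=\infty$ then for $x>y$ one has $L_X^*(x)\ge u$, and one shows $y<x_{\max}=\infty$ is not an atom by... hmm, this needs care. The cleanest route: in Cases A and B, show that $E_{L_X}(u)$ is \emph{not} of the form $(y,\infty)$ with $y$ an atom — either because $y\in E_{L_X}(u)$ (so $L_X^*(y)\ge u$, done via \eqref{eq:tail<}), or because $y$ is not an atom (so $\P(X\ge y)=\P(X>y)\le e^{-u}$, done via \eqref{eq:>,<e^-u}). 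In Case C, $L_X^*(x_{\max})=-\ln p_{\max}\ge u$, so $x_{\max}\in E_{L_X}(u)$, hence $y=\li{L_X}(u)\le x_{\max}$; and for $x<x_{\max}$ with $\P(X\ge x)>0$ one may have $L_X^*(x)<u$ or not, but in any event if $y<x_{\max}$ then... one shows $L_X^*(y)\ge u$ by right-continuity-type arguments, and if $y=x_{\max}$ then $\P(X\ge y)=p_{\max}\le e^{-u}$ directly.

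\textbf{Main obstacle.} The delicate point is the boundary case where $y=\li{L_X}(u)$ happens to be exactly an atom of $X$ but $L_X^*(y)<u$: this is precisely the configuration that makes \eqref{eq:ge,<e^-u} fail, and the whole proposition is about showing this can happen \emph{only} under (i)--(iii). So the technical heart is a careful analysis of whether $\inf E_{L_X}(u)$ belongs to $E_{L_X}(u)$ — i.e.\ whether $L_X^*$ attains the value $u$ from the left — and showing that the only way to have simultaneously $y\notin E_{L_X}(u)$ (so $L_X^*(y)<u$) and $\P(X=y)>0$ is that $y=x_{\max}$, $p_{\max}>0$, and $u>-\ln p_{\max}$. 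The key analytic fact enabling this is that $L_X^*(x)=\infty$ strictly to the right of $x_{\max}$ (when $x_{\max}<\infty$), which forces $\li{L_X}(u)\le x_{\max}$ and pins down $y=x_{\max}$ in the failure case; to the left of $x_{\max}$, one uses that $\supp X\cap(y,x)$ nonempty for $x$ slightly above an atom $y<x_{\max}$ guarantees $\P(X>y)=\P(X\ge y)$ fails only if $y$ is isolated in the support from above, which still makes $\P(X>y)\le\P(X\ge y)$ but one recovers the needed bound from the explicit value of $L_X^*$ at such an atom. I would present the explicit computation of $L_X^*$ near $x_{\max}$ as a standalone preliminary lemma, since everything else follows from it by bookkeeping with \eqref{eq:tail<}, \eqref{eq:>,<e^-u}, and monotonicity of $L_X^*$.
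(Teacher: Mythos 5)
Your ``if'' direction is fine, and your preliminary computation is essentially the paper's Lemma~\ref{lem:}: $L_X^*(x)=\infty$ for $x>x_{\max}$, and $L_X^*(x_{\max})\le-\ln p_{\max}$, which is all that part needs (for the full equality $L_X^*(x_{\max})=-\ln p_{\max}$, used in your Case~C, the lower bound requires spelling out $\lim_{t\to\infty}\E e^{t(X-x_{\max})}=p_{\max}$ by monotone/dominated convergence rather than ``letting $t\downarrow0$ and $t\to\infty$ appropriately''). The genuine gap is in the ``only if'' (contrapositive) direction, and you flag it yourself: in Cases A and B, and in Case~C when $y:=\li{L_X}(u)<x_{\max}$, everything hinges on the dichotomy ``either $L_X^*(y)\ge u$ or $\P(X=y)=0$'', and you never prove it — you only restate it (``show that $E_{L_X}(u)$ is not of the form $(y,\infty)$ with $y$ an atom'', ``right-continuity-type arguments''). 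The missing ingredient is a regularity fact about $L^*$: since $L^*$ is convex, lower semicontinuous and finite on $(-\infty,x_\infty)$ with $x_\infty=x_{\max}$ \big(Proposition~\ref{prop:gen}\eqref{L^* gen} and \eqref{eq:x=x}\big), it is continuous there; hence if $-\infty<y<x_{\max}$ (in particular, always in Case A when $y>-\infty$) and $L^*(x)\ge u$ for all $x>y$, then $L^*(y)=\lim_{x\downarrow y}L^*(x)\ge u$, so $y\in E_{L_X}(u)$ and \eqref{eq:tail<} closes the case; consequently the only way to have $L^*(y)<u$ at an atom $y$ is $y=x_{\max}$, $p_{\max}>0$, $u>-\ln p_{\max}$. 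Without this convexity/continuity input — or an equivalent substitute, such as the paper's route via the strict increase of $\li{L_X}$ on $(u_{-\infty},u_\infty)$ \big(Proposition~\ref{prop:gen}\eqref{li L}\big) combined with letting $v\uparrow u$ in \eqref{eq:>,<e^-u} and the identification $u_\infty=L^*(x_{\max})=-\ln p_{\max}$ of \eqref{eq:=-ln p_max} — the argument does not close.

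Two further points. Your closing remarks about an atom $y<x_{\max}$ being ``isolated in the support from above'' head in an unpromising direction: the local structure of $\supp X$ near such an atom yields no bound on $\P(X\ge y)$; what matters is only that $y$ lies in the interior of $\dom(L^*)$, where the convex function $L^*$ cannot jump. And the degenerate possibility $y=\li{L_X}(u)=-\infty$ (which does occur, e.g.\ when $p_{\max}=1$ and $u\le0$, cf.\ \eqref{eq:li L=}) should be disposed of explicitly: there $\P(X\ge y)=\P(X>y)\le e^{-u}$ by \eqref{eq:>,<e^-u}, so no failure — the paper handles this by noting that a failure forces $\P\big(X=\li{L_X}(u)\big)>0$ and hence $\li{L_X}(u)\in\R$.
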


A proof of Proposition~\ref{prop:} is given at the end of Appendix~\ref{appendix}. 

Note that condition \eqref{x max<infty} of Proposition~\ref{prop:} is actually implied by its condition \eqref{p max>0}, since the r.v.\ $X$ was assumed to be real-valued; however, it will be convenient to present condition \eqref{x max<infty} explicitly. 
It follows from Proposition~\ref{prop:} that inequality \eqref{eq:ge,<e^-u} holds for all $u\in\R$ whenever $x_{\max}$ is not an atom of the distribution of the r.v.\ $X$; in particular, that will be the case 
if $\supp X$ is not bounded from above. 

One can now state

\begin{corollary}\label{cor:} 
Let $X_1,\dots,X_n$ be any r.v.'s such that the condition \eqref{eq:<infty} is satisfied for all $j\in\{1,\dots,n\}$ with $X_j$ in place of $X$. 
Then for all $u\in\R$  
\begin{equation}\label{eq:cor}
	\P\!\big(X_1+\dots+X_n>\li{L_{X_1}}(u)+\dots+\li{L_{X_n}}(u)\big)\le e^{-u}.  
\end{equation}
\end{corollary}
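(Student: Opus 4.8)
The plan is to reduce Corollary~\ref{cor:} to a single application of \eqref{eq:>,<e^-u} together with Theorem~\ref{th:1}. First I would set $X:=X_1+\dots+X_n$ and observe that, by the inequality of arithmetic and geometric means applied in the H\"older-convolution form (this is exactly the H\"older-type inequality the term ``H\"older convolution'' refers to, and it is the content of the multi-line Minkowski computation in the Introduction, transcribed to the exponential setting), one has the pointwise bound
\begin{equation*}
	L_X(t)=\ln\E\exp\Big\{t\sum_{j=1}^n X_j\Big\}\le(L_{X_1}\fJ\cdots\fJ L_{X_n})(t)
\end{equation*}
for all $t\in T$. Indeed, for any $(\al_1,\dots,\al_n)\in\Si_n$, writing $tX_j=\al_j\cdot\frac{t}{\al_j}X_j$ and applying H\"older's inequality with exponents $1/\al_j$ gives $\E\exp\{t\sum_j X_j\}\le\prod_j\big(\E\exp\{\frac{t}{\al_j}X_j\}\big)^{\al_j}$, and taking logarithms yields $L_X(t)\le\sum_j\al_j L_{X_j}(t/\al_j)$; infimizing over $\Si_n$ gives the displayed bound.

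Next I would pass to Legendre--Fenchel transforms and their inverses. Since $L_X\le L_{X_1}\fJ\cdots\fJ L_{X_n}=:\tilde L$ pointwise on $T$, the definition \eqref{eq:L^*:=} gives $L_X^*\ge\tilde L^*$ on $\R$; and since $x\mapsto L^*(x)$ is nondecreasing for any $L$ (Proposition~\ref{prop:gen}, the monotonicity property used repeatedly above), the defining formula \eqref{eq:li L} yields the reverse inequality for the generalized inverses:
\begin{equation*}
	\li{L_X}(u)\le\li{\tilde L}(u)=\li{(L_{X_1}\fJ\cdots\fJ L_{X_n})}(u)
	\qquad\text{for all }u\in\R,
\end{equation*}
where the inequality follows because $L_X^*\ge\tilde L^*$ forces $E_{L_X}(u)\supseteq E_{\tilde L}(u)$, hence $\inf E_{L_X}(u)\le\inf E_{\tilde L}(u)$. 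Now Theorem~\ref{th:1} applies to the functions $L_{X_1},\dots,L_{X_n}$ (each satisfies \eqref{eq:ne infty} by the assumption \eqref{eq:<infty}) and evaluates the right-hand side: $\li{(L_{X_1}\fJ\cdots\fJ L_{X_n})}(u)=\li{L_{X_1}}(u)+\dots+\li{L_{X_n}}(u)$. Combining, $\li{L_X}(u)\le\sum_{j=1}^n\li{L_{X_j}}(u)$ for every real $u$.

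Finally I would invoke \eqref{eq:>,<e^-u} applied to the single r.v.\ $X=X_1+\dots+X_n$ (which satisfies \eqref{eq:<infty} on a common subinterval of $T$, so that $L_X$ obeys \eqref{eq:ne infty}): it gives $\P\big(X>\li{L_X}(u)\big)\le e^{-u}$. Since $\li{L_X}(u)\le\sum_j\li{L_{X_j}}(u)$, the event $\{X>\sum_j\li{L_{X_j}}(u)\}$ is contained in $\{X>\li{L_X}(u)\}$, and monotonicity of probability yields $\P\big(X_1+\dots+X_n>\li{L_{X_1}}(u)+\dots+\li{L_{X_n}}(u)\big)\le e^{-u}$, which is \eqref{eq:cor}. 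The only mildly delicate point is the verification that $L_X$ satisfies \eqref{eq:<infty}: one needs $\E e^{tX}<\infty$ for some $t>0$, which follows from H\"older's inequality applied on the intersection of the (nonempty, by \eqref{eq:<infty}) sets $\{t>0:\E e^{ns t X_j}<\infty\text{ for }s\text{ near }1\}$; more simply, for $t$ small enough that $n t$ lies in each set $\{s>0:\E e^{sX_j}<\infty\}$, H\"older with $n$ equal exponents gives $\E e^{tX}\le\prod_j(\E e^{ntX_j})^{1/n}<\infty$. I expect this bookkeeping with the common domain of finiteness to be the main (though minor) obstacle; everything else is a direct chain of the already-established monotonicity facts and Theorem~\ref{th:1}.
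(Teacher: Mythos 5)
Your proposal is correct and follows essentially the same route as the paper: H\"older's inequality gives $L_{X_1+\dots+X_n}\le L_{X_1}\fJ\cdots\fJ L_{X_n}$, the antitone passage from $L$ to $\li L$ gives $\li{L_{X_1+\dots+X_n}}\le\li{(L_{X_1}\fJ\cdots\fJ L_{X_n})}$, Theorem~\ref{th:1} evaluates the right-hand side as $\sum_j\li{L_{X_j}}$, and \eqref{eq:>,<e^-u} applied to $X=X_1+\dots+X_n$ finishes. Your extra bookkeeping (the explicit H\"older step with exponents $1/\al_j$ and the verification that $L_{X_1+\dots+X_n}$ satisfies \eqref{eq:ne infty}/\eqref{eq:<infty}) is sound and merely makes explicit what the paper leaves implicit.
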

This follows immediately from \eqref{eq:>,<e^-u} and Theorem~\ref{th:1}. Indeed, by H\"older's inequality, 
\begin{equation}\label{eq:H}
	L_{X_1+\dots+X_n}\le L_{X_1}\fJ\cdots\fJ L_{X_n}    
\end{equation}
and hence, by \eqref{eq:L^*:=} and \eqref{eq:li L}, 
$
	\li{\big(L_{X_1+\dots+X_n}\big)}\le\li{\big(L_{X_1}\fJ\cdots\fJ L_{X_n}\big)}.   
$ 
Now Theorem~\ref{th:1} yields 
\begin{equation}\label{eq:rio}
	\li{\big(L_{X_1+\dots+X_n}\big)}(u)\le\li{L_{X_1}}(u)+\dots+\li{L_{X_n}}(u)   
\end{equation}
for all $u\in\R$. It remains to use \eqref{eq:>,<e^-u} (with $X=X_1+\dots+X_n$). 

The use of H\"older's inequality to obtain \eqref{eq:H} should explain the term ``H\"older convolution'', used in this paper for the 
operation $\fJ$. 
This operation was implicitly used (for $n=2$) by Rio in his paper \cite{rio}, which originally inspired the present study. 
However, reasoning somewhat similar in spirit to that in the proof of Theorem~\ref{th:1} was used earlier in the proofs in \cite{tashkent} and \cite[Corollary~1]{pin-utev-exp}; cf.\ also \cite[Propositions~3.1 and 3.8]{pin-hoeff}. 

In the case when $n=2$ and $X_1$ and $X_2$ are centered non-degenerate r.v.'s with moment generating functions (m.g.f.'s) finite in a neighborhood of $0$, Rio~\cite[Lemma~2.1]{rio} proved \eqref{eq:rio} for $u>0$, 
which of course implies \eqref{eq:cor} in this case. 
The assumptions that $X_1$ and $X_2$ be centered and non-degenerate r.v.'s with mg.f.'s finite in a left neighborhood of $0$ were removed in \cite{rio.transl}, and the proof was significantly shortened; in fact, display \eqref{eq:rio12} follows largely the lines of reasoning presented in \cite{rio.transl}. 
Note also that \cite[Lemma~2.1]{rio} was used in \cite{rio13}. 



\appendix
\section{Supplements and auxiliaries}\label{appendix}
First here, let us list some general properties of the functions $L^*$ and $\li L$: 


\begin{proposition}\label{prop:gen}\ 
\begin{enumerate}[(a)]
	\item \label{>-infty} 
$L^*(x)>-\infty$ for all $x\in\R$. 
	\item \label{to infty} 	$L^*(x)\to\infty$ as $x\to\infty$. 
\item \label{L^* gen} 
	The function $L^*$ is nondecreasing, convex, 
lower semi-continuous on $\R$, and hence continuous on 
its effective domain \cite{rocka} $\dom(L^*):=\{x\in\R\colon L^*(x)<\infty\}$. 
Let 
\begin{equation}\label{eq:x infty}
	x_\infty:=\sup\dom(L^*),
\end{equation}
using the standard convention $\sup\emptyset:=-\infty$; so, 
\begin{equation}\label{eq:cases}
	\mbox{(i)}\ \dom(L^*)=(-\infty,x_\infty)\quad\text{or}\quad 
	\mbox{(ii)}\ \dom(L^*)=(-\infty,x_\infty]. 
\end{equation} 
	\item \label{u pm infty}
The monotonicity of $L^*$ makes possible the definitions  
\begin{equation}\label{eq:u pm infty}
	u_{-\infty}:=\lim_{x\to-\infty}L^*(x)\quad\text{and}\quad  u_\infty:=\lim_{x\uparrow x_\infty}L^*(x)   
\end{equation}
if $x_\infty>-\infty$; 
if $x_\infty=-\infty$ (i.e., if $\dom(L^*)=\emptyset$), let $u_\infty:=\infty$; thus, in any case $-\infty\le u_{-\infty}\le u_\infty\le\infty$. 
Moreover, 
\begin{equation}\label{eq:u_infty=}
u_\infty=
\left\{	
\begin{alignedat}{2}
	&\infty&\text{ if }\dom(L^*)&=(-\infty,x_\infty), \\ 
	&L^*(x_\infty)&\text{ if }\dom(L^*)&=(-\infty,x_\infty].  
\end{alignedat}
\right. 
\end{equation}
	\item \label{li<infty} $\li L(u)\le\tli L(u)<\infty$ for all $u\in\R$. 
	\item \label{li L} 
	The functions $\li L$ and $\tli L$ are nondecreasing on $\R$ and strictly increasing on the interval $(u_{-\infty},u_\infty)$ (which may be empty). 
\end{enumerate}
\end{proposition}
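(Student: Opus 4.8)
The statement to prove is Proposition~\ref{prop:gen}, listing basic properties (a)--(f) of $L^*$ and $\li L$, $\tli L$. Here is my plan.

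\medskip

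The plan is to treat the items in their natural logical order, since several later items rely on earlier ones. First I would dispatch (a) and (b): by the standing assumption \eqref{eq:ne infty} there is $t_0\in T$ with $L(t_0)<\infty$, so from the definition \eqref{eq:L^*:=} one gets $L^*(x)\ge t_0 x-L(t_0)>-\infty$ for every $x$, which is (a), and letting $x\to\infty$ along this same lower bound $t_0 x-L(t_0)\to\infty$ (as $t_0>0$), which is (b). For (c), the key observation is that $x\mapsto tx-L(t)$ is, for each fixed $t>0$, nondecreasing (since $t>0$), affine, hence convex and continuous; a supremum of such functions is nondecreasing, convex, and lower semicontinuous, and a convex function that is finite on an interval is automatically continuous on the interior of that interval, hence on the relative interior of $\dom(L^*)$. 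Monotonicity and convexity of $L^*$ then force $\dom(L^*)$ to be an interval unbounded to the left (it is nonempty by (b), or rather by (a) combined with the fact that $L^*$ takes a finite value somewhere — one should note $L^*$ need not be finite anywhere a priori, but since $L^*$ is convex, lsc and nondecreasing, $\dom(L^*)$ is an interval of the form $(-\infty,x_\infty)$ or $(-\infty,x_\infty]$ with $x_\infty\in[-\infty,\infty]$); the case $x_\infty=+\infty$ is also covered by the notation $(-\infty,x_\infty)=\R$). This gives the dichotomy \eqref{eq:cases}, with the caveat that $x_\infty=-\infty$ corresponds to $\dom(L^*)=\emptyset$, handled by $\sup\emptyset=-\infty$.

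\medskip

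Item (d) is essentially bookkeeping once (c) is in hand: monotonicity of $L^*$ guarantees the one-sided limits in \eqref{eq:u pm infty} exist in $[-\infty,\infty]$, and $u_{-\infty}\le u_\infty$ is immediate from monotonicity; the formula \eqref{eq:u_infty=} just distinguishes whether $x_\infty$ is in $\dom(L^*)$ — if $\dom(L^*)=(-\infty,x_\infty)$ then $L^*(x_\infty)=\infty$, and by lower semicontinuity (or just monotonicity and the fact that $L^*$ jumps to $\infty$ at $x_\infty$) the left limit $u_\infty$ must be $+\infty$; actually one has to be a little careful — an lsc convex function on $(-\infty,x_\infty)$ finite everywhere there could still have finite left limit at $x_\infty$, but then $x_\infty$ would be in the closure and lsc would give $L^*(x_\infty)\le u_\infty<\infty$, contradicting $x_\infty\notin\dom(L^*)$; so in fact $u_\infty=\infty$ in case (i), and in case (ii) lsc forces $L^*(x_\infty)\le u_\infty$ while monotonicity forces $L^*(x_\infty)\ge u_\infty$, hence equality. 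For (e), $\li L(u)\le\tli L(u)$ is clear since $\{L^*>u\}\subseteq\{L^*\ge u\}$, so the infimum over the smaller set is at least as large; and $\tli L(u)=\hat L(u)=\inf_{t>0}\frac{u+L(t)}{t}\le\frac{u+L(t_0)}{t_0}<\infty$ by Proposition~\ref{prop:li expl} and the standing assumption — alternatively one can see $\tli L(u)<\infty$ directly from (b), since $L^*(x)\to\infty>u$ forces the set $\{L^*>u\}$ to be nonempty.

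\medskip

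Finally (f): monotonicity of $\li L$ and $\tli L$ follows from the fact that the sets $E_L(u)=\{L^*\ge u\}$ shrink as $u$ increases, so their infima do not decrease (and similarly for $\tE_u$). For strict monotonicity on $(u_{-\infty},u_\infty)$: given $u_{-\infty}<u<u'<u_\infty$, I want $\li L(u)<\li L(u')$. The point is that on $(u_{-\infty},u_\infty)$ the function $L^*$ genuinely takes values between these limits, and since $L^*$ is continuous and nondecreasing on the interior of its domain with range covering $(u_{-\infty},u_\infty)$, one can find $x<x'$ in $\dom(L^*)$ with $L^*(x)<u\le u'<L^*(x')$... more carefully: pick $x'$ with $L^*(x')\in(u,u')$-ish — better, use that $E_L(u')$ has a left endpoint $\ge$ some point strictly to the right of the left endpoint of $E_L(u)$ because $L^*$ being continuous on its domain cannot jump past the interval $[u,u']\subseteq(u_{-\infty},u_\infty)$; intermediate value theorem gives points realizing values just below $u$ and just above $u'$, separating the two infima. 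The main obstacle I expect is precisely this last point — getting strict monotonicity exactly on the half-open–issue interval $(u_{-\infty},u_\infty)$ and matching endpoints correctly, together with correctly threading the degenerate cases ($\dom(L^*)$ empty, a single point, or all of $\R$; $(u_{-\infty},u_\infty)$ empty) through every item. Everything else is routine convex-analysis manipulation.
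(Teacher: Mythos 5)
Your proposal is correct and follows essentially the same route as the paper's own (quite terse) proof: (a)--(b) from \eqref{eq:ne infty} via the affine minorant $t_0x-L(t_0)$, (c) from $L^*$ being a pointwise supremum of nondecreasing affine functions, (d) from monotonicity plus lower semicontinuity (case (i)) and continuity on $\dom(L^*)$ (case (ii)), (e) from the definitions and (b), and (f) from monotonicity, continuity of $L^*$ on $(-\infty,x_\infty)$, and the intermediate-value fact that $L^*$ maps this interval onto an interval with endpoints $u_{-\infty},u_\infty$. One small correction to your sketch of (f): choose the two intermediate values \emph{strictly inside} $(u,u')$ — points $x_v<x_{v'}$ with $u<L^*(x_v)<L^*(x_{v'})<u'$ give $\li L(u)\le x_v<x_{v'}\le \li L(u')$ — whereas points realizing values just below $u$ and just above $u'$ only sandwich both infima in the same interval $[x,x']$ and do not separate them.
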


\begin{proof}[Proof of Proposition~\ref{prop:gen}]
Properties \eqref{>-infty} and \eqref{to infty} in Proposition~\ref{prop:gen} follow by \eqref{eq:ne infty}. 

Properties \eqref{L^* gen} are due to the fact that, by the definition \eqref{eq:L^*:=}, $L^*$ is the pointwise 
supremum of a family of increasing affine functions on $\R$. 

The inequality $u_{-\infty}\le u_\infty$ in part \eqref{u pm infty} 
follows from the monotonicity of $L^*$, stated in part \eqref{L^* gen}, the first line in \eqref{eq:u_infty=} follows  by the lower semi-continuity of $L^*$ and \eqref{to infty}, and the second line in \eqref{eq:u_infty=} follows by \eqref{eq:u pm infty} and the continuity of $L^*$ on $\dom(L^*)$. 

The first inequality in property \eqref{li<infty} follows straight from the definitions \eqref{eq:li L} and \eqref{eq:tli L} of $\li L$ and $\tli L$, and the second inequality there follows from property \eqref{to infty}. 

Finally, concerning properties \eqref{li L}: that the functions $\li L$ and $\tli L$ are nondecreasing on $\R$ follows trivially from the definitions, and the strict increase of these functions on $(u_{-\infty},u_\infty)$ follows 
because $L^*$ is nondecreasing on $\R$ and continuous on $(-\infty,x_\infty)$, and $L^*$ maps $(-\infty,x_\infty)$ onto an interval with the endpoints $u_{-\infty}$ and $u_\infty$. 	
\end{proof}

The following proposition deals with the important special case when $L$ is convex and, at least partially, strictly convex. 


\begin{proposition}\label{prop:str}\ 
Suppose that the function $L$ is convex on $(0,\infty)$ and strictly convex (and hence real-valued) on an interval $(t_0,t_1)$ such that $0<t_0<t_1<\infty$. 
For any $t\in(t_0,t_1)$, let $L'(t)$ and $L'(t-)$ denote, respectively, the right and left derivatives of $L$ at $t$. 
Let also $L'(t_0+):=\lim_{t\downarrow t_0}L'(t)$. 
Then the interval $\big(L'(t_0+),L'(t_1-)\big)$ is nonempty and the function $L^*$ is strictly increasing 
on $\big(L'(t_0+),L'(t_1-)\big)$.  
Moreover, $\big(L'(t_0+),L'(t_1-)\big)
\subseteq\dom(L^*)$ and $L^*$ is strictly increasing on the entire interval $\big(L'(t_0+),\infty\big)\cap\dom(L^*)$. 
\end{proposition}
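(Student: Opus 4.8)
The plan is to exploit the convex-analytic duality between $L$ (restricted to where it is strictly convex) and $L^*$, keeping in mind that $L^*$ is defined via a supremum over all of $T=(0,\infty)$, not just over $(t_0,t_1)$.

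\medskip
\textbf{Step 1: the interval of slopes is nonempty.} Since $L$ is strictly convex on $(t_0,t_1)$, its right derivative $L'(t)$ is strictly increasing there; hence for $t_0<s<t<t_1$ one has $L'(t_0+)\le L'(s)\le L'(s+)\le L'(t-)<L'(t)<L'(t_1-)$, with at least one strict inequality giving $L'(t_0+)<L'(t_1-)$. (One should be slightly careful that $L'(t_0+)$ is finite or $-\infty$; in either case the interval $\big(L'(t_0+),L'(t_1-)\big)$ makes sense and is nonempty.)

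\medskip
\textbf{Step 2: for $x\in\big(L'(t_0+),L'(t_1-)\big)$ the supremum defining $L^*(x)$ is attained at some $t_x\in(t_0,t_1)$, and $L^*(x)=t_xx-L(t_x)<\infty$.} Indeed, by the choice of $x$ there exist $t_0<s<t<t_1$ with $L'(s)<x<L'(t-)$; the function $g(t):=tx-L(t)$ is concave on $(0,\infty)$ with $g'(\tau)=x-L'(\tau)$ (right derivative), so $g'>0$ just to the right of $s$ and $g'<0$ just to the left of $t$, forcing an interior maximizer $t_x\in(s,t)\subseteq(t_0,t_1)$; at $t_x$ one has $x\in[L'(t_x-),L'(t_x)]$, i.e.\ $x$ is a subgradient of $L$ at $t_x$. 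This also shows $\big(L'(t_0+),L'(t_1-)\big)\subseteq\dom(L^*)$.

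\medskip
\textbf{Step 3: strict increase on $\big(L'(t_0+),L'(t_1-)\big)$.} Take $x<y$ in this interval with maximizers $t_x,t_y\in(t_0,t_1)$ as in Step 2. Then
\begin{equation*}
	L^*(y)=t_yy-L(t_y)\ge t_xy-L(t_x)=t_x(y-x)+\big(t_xx-L(t_x)\big)=t_x(y-x)+L^*(x)>L^*(x),
\end{equation*}
since $t_x>t_0>0$ and $y>x$. This proves $L^*$ is strictly increasing on $\big(L'(t_0+),L'(t_1-)\big)$.

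\medskip
\textbf{Step 4: extending to $\big(L'(t_0+),\infty\big)\cap\dom(L^*)$.} Fix $a:=L'(t_0+)$ and suppose $a<x<y$ with $y\in\dom(L^*)$ (so also $x\in\dom(L^*)$ since $\dom(L^*)$ is an interval unbounded below, by Proposition~\ref{prop:gen}\eqref{L^* gen}). Pick any $s\in(t_0,t_1)$ with $L'(s)<x$ (possible since $L'(s)\downarrow L'(t_0+)=a<x$ as $s\downarrow t_0$, using that $L'(s)$ is the right derivative of the convex function $L$ and $L'(t_0+)$ its limit). Now estimate $L^*$ from below using only the single point $s\in T$ together with monotone control of $L(s)$: from the definition \eqref{eq:L^*:=}, $L^*(y)\ge sy-L(s)$ and, because $s$ is a candidate point, $L^*(x)\ge sx-L(s)$, but we need a genuine strict gap. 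The clean way: convexity of $L^*$ (Proposition~\ref{prop:gen}\eqref{L^* gen}) together with the already-established strict increase on a subinterval. Concretely, choose $x'\in\big(L'(t_0+),L'(t_1-)\big)$ with $x'<x$; then $x'<x<y$, and by convexity $L^*$ has nondecreasing slopes, so its slope on $[x,y]$ is at least its slope on $[x',x]$, and the latter is strictly positive by Step 3 (as $L^*(x)>L^*(x')$ and both are finite). Hence $L^*(y)>L^*(x)$. If no such $x'<x$ exists because $x$ itself is $\le L'(t_0+)$, that contradicts $x>a=L'(t_0+)$; and the edge case $L'(t_1-)\le x$ is handled because we only need one point $x'$ strictly between $a$ and $\min(x,L'(t_1-))$, which exists as that open interval is nonempty when $x>a$.

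\medskip
\textbf{Main obstacle.} The delicate point is Step 4: the supremum defining $L^*(x)$ ranges over all of $(0,\infty)$, so the ``local'' subgradient description of maximizers (Step 2) need not apply once $x\ge L'(t_1-)$ — the maximizer may escape $(t_0,t_1)$ or fail to exist (the sup may be $+\infty$, which is why we intersect with $\dom(L^*)$). Bridging from the interval where we control maximizers to the full half-line $\big(L'(t_0+),\infty\big)\cap\dom(L^*)$ is exactly where we must lean on the global convexity and lower semicontinuity of $L^*$ from Proposition~\ref{prop:gen}, rather than on any further structure of $L$; making that hand-off airtight (including the possibility $L'(t_0+)=-\infty$ and the degenerate cases where $\dom(L^*)$ has the form $(-\infty,x_\infty]$) is the part requiring the most care.
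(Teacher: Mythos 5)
Your proposal is correct and follows essentially the paper's route: show that for $x\in\big(L'(t_0+),L'(t_1-)\big)$ the supremum defining $L^*(x)$ is attained at some $t_x\in(t_0,t_1)$, deduce strict increase on that slope interval, and then propagate strict increase to all of $\big(L'(t_0+),\infty\big)\cap\dom(L^*)$ via the convexity of $L^*$ (the paper's ``general fact''); your Step 3 is in fact a slicker version of the paper's case analysis, since evaluating the supremum for $L^*(y)$ at the maximizer $t_x>t_0>0$ for $x$ gives $L^*(y)\ge L^*(x)+t_x(y-x)>L^*(x)$ directly. The only loose end is minor: in Step 4, when $x\ge L'(t_1-)$ the claim $L^*(x)>L^*(x')$ is not literally ``by Step 3''; one should insert a second point $x''\in\big(x',\min(x,L'(t_1-))\big)$ and use $L^*(x')<L^*(x'')\le L^*(x)$ (strict increase on the slope interval plus monotonicity), after which your chord-slope argument closes the proof.
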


\begin{proof}[Proof of Proposition~\ref{prop:str}]
The strict convexity of $L$ implies that $L'$ is strictly increasing and right-continuous on the nonempty interval $(t_0,t_1)$. So, the interval $\big(L'(t_0+),L'(t_1-)\big)$ is nonempty and for any 
$x$ in this interval one has 
\begin{align*}
	t_x:={L'}^{-1}(x)
	&:=\inf\{t\in(t_0,t_1)\colon L'(t)\ge x\}
	=\min\{t\in(t_0,t_1)\colon L'(t)\ge x\}\in(t_0,t_1). 
\end{align*}
Clearly, $t_x$ is nondecreasing in $x\in\big(L'(t_0+),L'(t_1-)\big)$, and  
\begin{equation}\label{eq:< <}
	L'(t_x-)\le x\le L'(t_x) 
\end{equation}
for all $x\in\big(L'(t_0+),L'(t_1-)\big)$.

Take any $x\in\big(L'(t_0+),L'(t_1-)\big)$. 
The right derivative of $tx-L(t)$ in $t\in(t_0,t_1)$ is $x-L'(t)$, which is greater than $0$ for $t\in(0,t_x)$ and no greater than $0$ for $t\in[t_x,\infty)$. So, 
$tx-L(t)$ is increasing in $t\in(t_0,t_x]$ and non-increasing in $t\in[t_x,t_1)$. 
Since $tx-L(t)$ is concave in $t\in(0,\infty)$, it follows that $tx-L(t)$ is increasing in $t\in(0,t_x]$ and non-increasing in $t\in[t_x,\infty)$. Thus, 
\begin{equation}\label{eq:=}
	L^*(x)=t_x x-L(t_x),  
\end{equation}
for all $x\in\big(L'(t_0+),L'(t_1-)\big)$. 

Take now any $x$ and $y$ such that $L'(t_0+)<x<y<L'(t_1-)$. For brevity, let here $s:=t_x$ and $t:=t_y$, so that one has $0<t_0<s\le t<t_1$. If $s=t$ then $L^*(x)=sx-L(s)=tx-L(t)<ty-L(t)=L^*(y)$, which yields $L^*(x)<L^*(y)$. 
In the remaining case, when $s<t$, recall \eqref{eq:< <} and write 
\begin{align*}
	L^*(y)-L^*(x)&=[ty-L(t)]-[sx-L(s)] \\ 
	&\ge[tL'(t-)-L(t)]-[sL'(s)-L(s)] \\
	&=s[L'(t-)-L'(s)]+\int_s^t[L'(t-)-L'(\tau)]\dd\tau>0,  
\end{align*}
since $s>0$ and $L'$ is strictly increasing on $(t_0,t_1)$. 
Thus, in either case $L^*(x)<L^*(y)$. 

This shows that indeed $L^*$ is strictly increasing 
on the open interval $\big(L'(t_0+),L'(t_1-)\big)$, which in turn immediately implies that $\big(L'(t_0+),L'(t_1-)\big)
\subseteq\dom(L^*)$. 
The final sentence of Proposition~\ref{prop:str} follows because of this general fact \big(implicitly used to obtain \eqref{eq:=}\big): if a function $f$ is convex on $(0,\infty)$, $0\le a<b\le c<\infty$, $(a,c]\subseteq\dom f$, and $f$ is strictly increasing on $(a,b)$, then $f$ is so on the entire interval $(a,c]$. 
\end{proof}

The following lemma seems to be of general interest; it will also be used in the proof of Proposition~\ref{prop:}. 


\begin{lemma}\label{lem:} 
For brevity, let here $L:=L_X$ and recall the definitions of $x_\infty$ and $u_\infty$ in \eqref{eq:x infty} and \eqref{eq:u pm infty}. 
Then 
\begin{equation}\label{eq:x=x}
	x_\infty=x_{\max}.   
\end{equation}
Moreover, if condition \eqref{x max<infty} 
of Proposition~\ref{prop:} holds, then 
\begin{equation}\label{eq:=-ln p_max}
	u_\infty=L^*(x_{\max})=-\ln p_{\max};       
\end{equation}
if at that condition \eqref{p max>0} 
of Proposition~\ref{prop:} holds as well, then 
\begin{equation}\label{eq:li L=}
	\li L(u)=
	\left\{
	\begin{aligned}
	x_{\max} &\text{ if } u\in(-\ln p_{\max},\infty), \\ 
	x_{\max} &\text{ if } u=-\ln p_{\max} \text{ and } p_{\max}<1, \\ 
	-\infty &\text{ if } u=-\ln p_{\max} \text{ and } p_{\max}=1.    
	\end{aligned}
	\right.
\end{equation}
\end{lemma}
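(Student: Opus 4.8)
The plan is to analyze the Cram\'er--Chernoff function $L=L_X$ directly from its definition $L(t)=\ln\E e^{tX}$, tracking how the behavior of $L^*$ near $x_{\max}$ is governed by the atom at $x_{\max}$ (if any).

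\smallskip\noindent\textbf{Step 1: the identity $x_\infty=x_{\max}$.} I would split into the two cases $x_{\max}=\infty$ and $x_{\max}<\infty$. If $x_{\max}<\infty$, then $tX\le tx_{\max}$ a.s.\ for $t>0$, so $L(t)\le tx_{\max}$, whence $tx-L(t)\ge t(x-x_{\max})\to\infty$ as $t\to\infty$ for any $x>x_{\max}$; thus $L^*(x)=\infty$ for $x>x_{\max}$, giving $x_\infty\le x_{\max}$. Conversely, for $x<x_{\max}$ one has $\P(X>x)>0$, so for small $t>0$, $\E e^{tX}\ge e^{tx'}\P(X>x')$ for some $x'\in(x,x_{\max})$; a short computation (take $t\downarrow0$ and use $tx-L(t)\ge tx-t x'-\ln(1/\P(X>x'))+o(t)$... actually more simply observe $L^*(x)=\sup_t[tx-L(t)]$ and show it is finite using $L(t)\ge tx+o(t)$ fails — the cleanest route is: $L$ is convex with $L(0+)\le0$, $L'(0+)=\E X$ when finite, and $L(t)/t\to x_{\max}$ as $t\to\infty$, so $tx-L(t)$ is eventually decreasing when $x<x_{\max}$, hence $L^*(x)<\infty$). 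If $x_{\max}=\infty$, then for every real $x$, $\P(X>x)>0$, so $L(t)\ge tx+O(1)$ is not forced, but again $L(t)/t\to x_{\max}=\infty$, so for any fixed $x$, $tx-L(t)\to-\infty$ and in fact $tx-L(t)$ is eventually decreasing, so $L^*(x)<\infty$ for all $x$, i.e.\ $\dom(L^*)=\R$ and $x_\infty=\infty=x_{\max}$.

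\smallskip\noindent\textbf{Step 2: $u_\infty=L^*(x_{\max})=-\ln p_{\max}$ when $x_{\max}<\infty$.} Here I compute $L^*(x_{\max})=\sup_{t>0}[tx_{\max}-\ln\E e^{tX}]=\sup_{t>0}[-\ln\E e^{t(X-x_{\max})}]=-\inf_{t>0}\ln\E e^{t(X-x_{\max})}$. Since $X-x_{\max}\le0$ a.s., the random variable $e^{t(X-x_{\max})}$ decreases in $t$ and converges, as $t\to\infty$, to $\mathbf 1\{X=x_{\max}\}$ pointwise and boundedly; by monotone/dominated convergence $\E e^{t(X-x_{\max})}\downarrow\P(X=x_{\max})=p_{\max}$. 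Hence $\inf_{t>0}\ln\E e^{t(X-x_{\max})}=\ln p_{\max}$, so $L^*(x_{\max})=-\ln p_{\max}$. That $u_\infty=L^*(x_{\max})$ in this case follows from Proposition~\ref{prop:gen}\eqref{u_infty=}: since $x_\infty=x_{\max}<\infty$ and $L^*(x_{\max})=-\ln p_{\max}<\infty$ (here I need $p_{\max}>0$ only if I want finiteness; if $p_{\max}=0$ then $L^*(x_{\max})=\infty$ and we are in case (i) of \eqref{eq:cases}, still consistent with $u_\infty=\infty=L^*(x_{\max})$), we are in case (ii) of \eqref{eq:cases} precisely when $p_{\max}>0$, and then $u_\infty=L^*(x_\infty)=-\ln p_{\max}$.

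\smallskip\noindent\textbf{Step 3: the formula for $\li L(u)$ when $x_{\max}<\infty$ and $p_{\max}>0$.} By Step 2, $u_\infty=-\ln p_{\max}$ and $x_\infty=x_{\max}$. For $u>u_\infty=-\ln p_{\max}$: the set $E_L(u)=\{x:L^*(x)\ge u\}$ contains no point $x<x_{\max}$ (there $L^*(x)\le u_{-\infty}\le\cdots<u_\infty<u$ — more carefully, $L^*$ maps $(-\infty,x_\infty)$ into $[u_{-\infty},u_\infty)$, strictly below $u$), but it does contain $x_{\max}$ since $L^*(x_{\max})=u_\infty$... wait, $L^*(x_{\max})=-\ln p_{\max}=u_\infty<u$, so $x_{\max}\notin E_L(u)$ either when $u>-\ln p_{\max}$? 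Then I must also use that $L^*(x)=\infty\ge u$ for $x>x_{\max}$ (shown in Step 1), so $E_L(u)=(x_{\max},\infty)$ and $\inf E_L(u)=x_{\max}$, matching line 1 of \eqref{eq:li L=}. For $u=-\ln p_{\max}$: now $L^*(x_{\max})=u$, so $x_{\max}\in E_L(u)$, and no point below $x_{\max}$ lies in $E_L(u)$ when $p_{\max}<1$ (because then $u_{-\infty}\le$ something $<u_\infty=u$ and $L^*<u$ on $(-\infty,x_{\max})$); hence $\inf E_L(u)=x_{\max}$, line 2. When $p_{\max}=1$, i.e.\ $X\equiv x_{\max}$, then $L(t)=tx_{\max}$, $L^*(x)=0$ for $x\le x_{\max}$ and $=\infty$ for $x>x_{\max}$, so for $u=-\ln p_{\max}=0$ the set $E_L(0)=(-\infty,\infty)$... no, $=\R$ actually (all $x\le x_{\max}$ give $L^*(x)=0\ge0$), so $\inf E_L(0)=-\infty$, line 3.

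\smallskip\noindent\textbf{Main obstacle.} The delicate points are all at the boundary: first, pinning down whether $x_{\max}$ itself lies in $\dom(L^*)$ and whether case (i) or (ii) of \eqref{eq:cases} holds — this is exactly controlled by $p_{\max}$ via the limit $\E e^{t(X-x_{\max})}\downarrow p_{\max}$, so Step 2's convergence argument is the technical heart. Second, for \eqref{eq:li L=} one must carefully distinguish the strict inequality $u>-\ln p_{\max}$ (where the atom's contribution $L^*(x_{\max})=-\ln p_{\max}$ is \emph{not} enough, and one falls back on $L^*=\infty$ to the right of $x_{\max}$) from the equality case (where the atom exactly suffices), and within the equality case distinguish $p_{\max}<1$ from $p_{\max}=1$ (where $L^*$ is identically $0$ on all of $(-\infty,x_{\max}]$, collapsing the infimum to $-\infty$). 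Organizing these three regimes cleanly, and invoking Proposition~\ref{prop:gen}\eqref{u_infty=}, \eqref{L^* gen} to justify the ``$L^*<u$ on $(-\infty,x_{\max})$'' claims, is where the care is needed; none of it is deep.
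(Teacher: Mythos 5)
Your Steps 1 and 2 are essentially the paper's own argument (the direct bounds $L(t)\le tx_{\max}$ and $L(t)\ge tx+\ln\P(X>x)$, and the monotone/dominated-convergence computation $\E e^{t(X-x_{\max})}\downarrow p_{\max}$, plus \eqref{eq:u_infty=}); in Step 1 your ``cleanest route'' via $L(t)/t\to x_{\max}$ and ``eventually decreasing'' is shakier than needed, but the one-line bound $L^*(x)\le-\ln\P(X>x)<\infty$ that you started to write down already finishes that case, so no real issue there. Likewise the first and third lines of \eqref{eq:li L=} are handled correctly.

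The genuine gap is in the second line of \eqref{eq:li L=}, the case $u=-\ln p_{\max}$ with $0<p_{\max}<1$. There you need the \emph{strict} inequality $L^*(x)<L^*(x_{\max})=-\ln p_{\max}$ for every $x<x_{\max}$; your justification ``(because then $u_{-\infty}\le$ something $<u_\infty=u$ and $L^*<u$ on $(-\infty,x_{\max})$)'' simply restates this claim. Monotonicity of $L^*$ only gives $L^*\le u_\infty$ on $(-\infty,x_{\max}]$, and a priori $L^*$ could equal $u_\infty$ already at some $x_0<x_{\max}$ (i.e.\ be constant on $[x_0,x_{\max}]$), in which case $\li L(-\ln p_{\max})$ would be $\le x_0<x_{\max}$ and line 2 would fail. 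Ruling this out is exactly where the paper works: it shows via Cauchy--Schwarz that $L''>0$ near $0$ (using non-degeneracy of $X$, which is where $p_{\max}<1$ enters), and then invokes Proposition~\ref{prop:str} to conclude that $L^*$ is strictly increasing on an interval $(x,x_{\max}]$, hence $L^*<L^*(x_{\max})$ on $(-\infty,x_{\max})$. If you prefer to avoid Proposition~\ref{prop:str}, an elementary patch is available: by convexity of $L^*$ (Proposition~\ref{prop:gen}\eqref{L^* gen}), if $L^*(x_0)=u_\infty$ for some $x_0<x_{\max}$ then $L^*\equiv u_\infty$ on all of $(-\infty,x_{\max})$, so $u_{-\infty}=u_\infty=-\ln p_{\max}>0$; but $L^*(x)\le-\ln\P(X>x)\to 0$ as $x\to-\infty$, so $u_{-\infty}\le 0$, a contradiction. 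Either way, some argument of this kind must be supplied; as written, your proof of line 2 is circular.
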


\begin{proof}[Proof of Lemma~\ref{lem:}]
If for some real $y$ one has $y\in(x_{\max},\infty)$, then $x_{\max}\in\R$ and $\P(X\le x_{\max})=1$, whence $L(t)\le tx_{\max}$ for all $t>0$ and $L^*(y)\ge\sup_{t>0}[ty-tx_{\max}]=\infty$. 
On the other hand, if $y\in(-\infty,x_{\max})$ then $p_y:=\P(X>y)>0$ 
and hence $L(t)\ge ty+\ln p_y$ for all $t>0$ and $L^*(y)\le
-\ln p_y<\infty$. 
Thus, $L^*(y)=\infty$ for all $y\in(x_{\max},\infty)$ and $L^*(y)<\infty$ for all $y\in(-\infty,x_{\max})$. Now \eqref{eq:x=x} follows. 

To prove \eqref{eq:=-ln p_max}, suppose that indeed $x_{\max}<\infty$; then, by the definition of $x_{\max}$ in condition \eqref{x max<infty} 
of Proposition~\ref{prop:}, $x_{\max}\in\R$. 
The first equality in \eqref{eq:=-ln p_max} now follows by \eqref{eq:u_infty=} and \eqref{eq:x=x}; 
indeed, \eqref{eq:u_infty=} shows that $u_\infty=L^*(x_\infty)$ whenever $x_\infty\in\R$. 
Concerning the second equality in \eqref{eq:=-ln p_max}, 
one has $\P(X\le x_{\max})=1$ and hence $\E\exp\{t(X-x_{\max})\}$ is non-increasing in $t$. Using this monotonicity and dominated convergence, one sees that   
\begin{equation*}
	\exp\{-L^*(x_{\max})\}=\inf_{t>0}\E\exp\{t(X-x_{\max})\}
	=\lim_{t\to\infty}\E\exp\{t(X-x_{\max})\}=\E\ii\{X=x_{\max}\}=p_{\max}; 
\end{equation*}
here $\exp\{-L^*(x_{\max})\}$ is naturally understood as $0$ when $L^*(x_{\max})=\infty$. 
Thus, \eqref{eq:=-ln p_max} is completely proved.  

It remains to prove \eqref{eq:li L=}, assuming that $x_{\max}<\infty$ and $p_{\max}>0$, 
so that, by \eqref{eq:=-ln p_max}, $u_\infty<\infty$. 
Now \eqref{eq:u_infty=} and \eqref{eq:cases} yield $\dom(L^*)=(-\infty,x_\infty]$, whence, by 
\eqref{eq:x=x},   
%
\begin{equation}\label{eq:dom=}
	\dom(L^*)=(-\infty,x_{\max}]. 
\end{equation}
Take any $u\in\R$. 

If $u\in(-\ln p_{\max},\infty)$ then, by \eqref{eq:=-ln p_max} and \eqref{eq:dom=}, 
$L^*(x_{\max})<u<L^*(x)$ for all $x\in(x_{\max},\infty)$ and hence 
%
$\li L(u)=
x_{\max}$. 

Consider next the case when $u=-\ln p_{\max}$ 
and $0<p_{\max}<1$. 
Then, in view of \eqref{eq:L_X} and \eqref{eq:<infty}, 
$L''(t)(\E e^{tX})^2=\E X^2 e^{tX}\E e^{tX}-(\E X e^{tX})^2>0$ (by the Cauchy--Schwarz inequality) for all $t$ in a right neighborhood of $0$, so that $L$ is strictly convex in that neighborhood. 
Hence, by Proposition~\ref{prop:str} and \eqref{eq:dom=}, the function $L^*$ is 
strictly increasing on a nonempty interval of the form $(x,x_{\max}]$. 
So, since the function $L^*$ is nondecreasing on $\R$, it is strictly less than $L^*(x_{\max})$ on the interval $(-\infty,x_{\max})$. 
Hence, by \eqref{eq:li L} and \eqref{eq:=-ln p_max}, indeed 
$\li L(u)=x_{\max}$ if $u=-\ln p_{\max}$ and $p_{\max}<1$. 

Finally, if $p_{\max}=1$ \big(i.e., $\P(X=x_{\max})=1$\big), then $L(t)=tx_{\max}$ for all $t\in(0,\infty)$, $L^*(x)$ equals $0$ for $x\in(-\infty,x_{\max}]$ and $\infty$ for $x\in(x_{\max},\infty)$, so that 
$\li L(u)$ equals $-\infty$ for $u\in(-\infty,0]$ and $x_{\max}$ for $u\in(0,\infty)$. 
In particular, $\li L(u)=-\infty$ if $u=0=-\ln p_{\max}$. 
\end{proof}


Now one is ready for 

\begin{proof}[Proof of Proposition~\ref{prop:}]
Let again, for brevity, $L:=L_X$. 

To prove the the ``only if'' part of Proposition~\ref{prop:}, take any $u\in\R$ violating \eqref{eq:ge,<e^-u}. Then, by \eqref{eq:>,<e^-u}, 
\begin{equation*}
	\P\!\big(X=\li L(u)\big)>0. 
\end{equation*}

Therefore and because 
the r.v.\ $X$ was assumed to be real-valued,  
necessarily 
\begin{equation}\label{eq:li L(u)>-infty}
\li L(u)>-\infty, 	
\end{equation}
which, by the definition of $u_{-\infty}$ in \eqref{eq:u pm infty}, in turn implies $u>u_{-\infty}$, so that the interval $(u_{-\infty},u]$ is nonempty. 

If now $u<u_\infty$ then the interval $(u_{-\infty},u]$ is 
contained in the interval $(u_{-\infty},u_\infty)$, on which, by Proposition~\ref{prop:gen}\eqref{li L}, the function $\li L$ is strictly increasing. So, for any 
$v\in(u_{-\infty},u)$ one has $\li L(u)>\li L(v)$ and hence  
\begin{equation*}
	\P\!\big(X\ge\li L(u)\big)\le \P\!\big(X>\li L(v)\big)\le e^{-v},   
\end{equation*} 
by \eqref{eq:>,<e^-u} with $v$ in place of $u$; 
letting now $v\uparrow u$, one obtains \eqref{eq:>,<e^-u}, which contradicts the assumption on $u$. 

Thus, it is necessary that $u\in[u_\infty,\infty)$. 
Then, of course, $u_\infty<\infty$, which, in view of \eqref{eq:u_infty=},  
means that case (ii) in \eqref{eq:cases} takes place, whence  
necessarily $x_\infty\in\R$. 
So, by \eqref{eq:x=x}, condition \eqref{x max<infty} of Proposition~\ref{prop:} holds. 
Further, by \eqref{eq:=-ln p_max}, $u_\infty=-\ln p_{\max}$ and hence $u\in[-\ln p_{\max},\infty)$. 

If $u=-\ln p_{\max}$ and $p_{\max}<1$ then, by \eqref{eq:li L=}, $\li{L}(u)=x_{\max}$ and hence  $\P\!\big(X\ge\li{L}(u)\big)=\P\!\big(X\ge x_{\max}\big)
=\P\!\big(X=x_{\max}\big)=p_{\max}=e^{-u}$, so that \eqref{eq:ge,<e^-u} holds. 
If $u=-\ln p_{\max}$ and $p_{\max}=1$ then, by \eqref{eq:li L=}, $\li{L}(u)=-\infty$, 
which contradicts \eqref{eq:li L(u)>-infty}. 

We conclude that any $u\in\R$ violating \eqref{eq:ge,<e^-u} must satisfy condition \eqref{u ge} of Proposition~\ref{prop:} as well. 
Thus, the ``only if'' part is proved.


The ``if'' part of Proposition~\ref{prop:} follows immediately by Lemma~\ref{lem:}. Indeed, if $p_{\max}:=\P(X=x_{\max})>0$ and $u>-\ln p_{\max}$, then the right-hand side of \eqref{eq:ge,<e^-u} is strictly less  $p_{\max}=\P\!\big(X=x_{\max}\big)$, whereas, by \eqref{eq:li L=}, the left-hand side of \eqref{eq:ge,<e^-u} 
equals $\P\!\big(X\ge x_{\max}\big)$, which 
is no less than $\P\!\big(X=x_{\max}\big)$. 
Proposition~\ref{prop:} is now completely proved. 
\end{proof}

\textbf{Acknowledgment.}\ I am pleased to thank Emmanuel Rio for bringing his work \cite{rio00,rio12,rio.transl,rio13} to my attention. 

\bibliographystyle{abbrv}


\bibliography{C:/Users/Iosif/Dropbox/mtu/bib_files/citations12.13.12}

\def\cprime{$'$} \def\polhk#1{\setbox0=\hbox{#1}{\ooalign{\hidewidth
  \lower1.5ex\hbox{`}\hidewidth\crcr\unhbox0}}}
  \def\polhk#1{\setbox0=\hbox{#1}{\ooalign{\hidewidth
  \lower1.5ex\hbox{`}\hidewidth\crcr\unhbox0}}}
  \def\polhk#1{\setbox0=\hbox{#1}{\ooalign{\hidewidth
  \lower1.5ex\hbox{`}\hidewidth\crcr\unhbox0}}} \def\cprime{$'$}
  \def\polhk#1{\setbox0=\hbox{#1}{\ooalign{\hidewidth
  \lower1.5ex\hbox{`}\hidewidth\crcr\unhbox0}}}
  \def\polhk#1{\setbox0=\hbox{#1}{\ooalign{\hidewidth
  \lower1.5ex\hbox{`}\hidewidth\crcr\unhbox0}}}
\begin{thebibliography}{10}

\bibitem{maligr}
L.~Maligranda.
\newblock A simple proof of the {H}\"older and the {M}inkowski inequality.
\newblock {\em Amer. Math. Monthly}, 102(3):256--259, 1995.

\bibitem{pin-hoeff}
I.~Pinelis.
\newblock On the {B}ennett-{H}oeffding inequality, \emph{a shorter version to
  appear in} \emph{{A}nnales de l'{I}nstitut {H}enri {P}oincar\'e}.
\newblock \url{http://arxiv.org/abs/0902.4058}, 2012.

\bibitem{tashkent}
I.~F. Pinelis.
\newblock An estimate for large deviation probabilities.
\newblock In {\em Limit theorems for random processes and statistical
  inference}, pages 160--165, 220--221. ``Fan'', Tashkent, 1981.

\bibitem{pin-utev-exp}
I.~F. Pinelis and S.~A. Utev.
\newblock Sharp exponential estimates for sums of independent random variables.
\newblock {\em Theory Probab. Appl.}, 34(2):340--346, 1989.

\bibitem{rio}
E.~Rio.
\newblock Local invariance principles and their application to density
  estimation.
\newblock {\em Probab. Theory Related Fields}, 98(1):21--45, 1994.

\bibitem{rio00}
E.~Rio.
\newblock {\em Th\'eorie asymptotique des processus al\'eatoires faiblement
  d\'ependants}, volume~31 of {\em Math\'ematiques \& Applications (Berlin)
  [Mathematics \& Applications]}.
\newblock Springer-Verlag, Berlin, 2000.

\bibitem{rio.transl}
E.~Rio.
\newblock English translation of the monograph \emph{Th\'eorie asymptotique des
  processus al\'eatoires faiblement d\'ependants} (2000) by {E}.\ {R}io.
\newblock Work in progress, 2012.

\bibitem{rio12}
E.~Rio.
\newblock In\'egalit\'es exponentielles et in\'egalit\'es de concentration.
\newblock Preprint, \url{http://cel.archives-ouvertes.fr/cel-00702524}, 2012.

\bibitem{rio13}
E.~Rio.
\newblock On {M}c{D}iarmid's concentration inequality.
\newblock Preprint, \url{http://hal.archives-ouvertes.fr/hal-00798275}, 2013.

\bibitem{rocka}
R.~T. Rockafellar.
\newblock {\em Convex analysis}.
\newblock Princeton Landmarks in Mathematics. Princeton University Press,
  Princeton, NJ, 1997.
\newblock Reprint of the 1970 original, Princeton Paperbacks.

\end{thebibliography}
\end{document}